\newcommand{\bbA}{\mathbb{A}}
\newcommand{\bbP}{\mathbb{P}}
\newtheorem{theorem}{Theorem}[section]
\newtheorem{proposition}[theorem]{Proposition}
\newtheorem{lemma}[theorem]{Lemma}
\newtheorem{cor}[theorem]{Corollary}
\theoremstyle{plain}
\numberwithin{equation}{section}
\theoremstyle{definition}
\newtheorem{remark}[theorem]{Remark}
\newtheorem{example}[theorem]{Example}
\newcommand{\C}{{\mathbb C}}
\DeclareMathOperator{\Spec}{Spec}
\DeclareMathOperator{\trdeg}{trdeg}
\newcommand{\isomto}{\overset{\sim}{\rightarrow}}
\newcommand{\A}{{\mathbb A}}
\newcommand{\bbZ}{{\mathbb Z}}
\newcommand{\rank}{\operatorname{rank}}
\newcommand{\Hilb}{\operatorname{Hilb}}
\begin{document}

\title{On a dynamical version of a theorem of Rosenlicht}

\author{J.~P.~Bell}
\address{
Jason Bell\\
Department of Pure Mathematics\\
University of Waterloo\\
Waterloo, ON N2L 3G1\\
CANADA 
}
\email{jpbell@uwaterloo.ca}

\author{D.~Ghioca}
\address{
Dragos Ghioca\\
Department of Mathematics\\
University of British Columbia\\
Vancouver, BC V6T 1Z2\\
Canada
}
\email{dghioca@math.ubc.ca}

\author{Z.~Reichstein}
\address{
Zinovy Reichstein\\
Department of Mathematics\\
University of British Columbia\\
Vancouver, BC V6T 1Z2\\
Canada
}
\email{reichst@math.ubc.ca}
\begin{abstract} Consider the action of an algebraic group $G$ on 
an irreducible algebraic variety $X$ all defined over a field $k$. 
M.~Rosenlicht showed that orbits in general position in $X$ can 
be separated by rational invariants.  We prove a dynamical analogue 
of this theorem, where $G$ is replaced by a semigroup of dominant
rational maps $X \dasharrow X$.  Our semigroup $G$ 
is not required to have the structure of an algebraic variety
and can be of arbitrary cardinality.
\end{abstract}

\maketitle

\section{Introduction}

The starting point for this paper is the following  
classical theorem of M.~Rosenlicht~\cite[Theorem 2]{rosenlicht}.

\begin{theorem} \label{thm.rosenlicht-algebraic} 
Consider the action of an algebraic group $G$ on 
an irreducible algebraic variety $X$ defined over a field $k$. 

\smallskip
(a) There exists a $G$-invariant dense open subset 
$X_0 \subset X$ and a $G$-equivariant morphism
$\phi \colon X_0 \to Z$ (where $G$ acts trivially on $Z$), 
with the following property.
For any field extension $K/k$ and any $K$-point $x \in X_0(K)$,
the orbit $G \cdot x$ equals the fiber $\phi^{-1}(\phi(x))$.

\smallskip
(b) Moreover, the field of invariants $k(X)^G$ is a purely inseparable
extension of $\phi^* k(Z)$, and one can choose $Z$ and $\phi$ so that 
$\phi^* k(Z) = k(X)^G$ (in characteristic zero, this is automatic).
\end{theorem}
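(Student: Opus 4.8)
The plan is to construct $Z$ and $\phi$ directly out of the field of invariants $L := k(X)^G \subseteq k(X)$, and then do the geometric work needed to see that the fibres of $\phi$ are single orbits. Since $L$ is a subfield of the finitely generated extension $k(X)/k$, it is itself finitely generated over $k$; choosing a finitely generated $k$-subalgebra $A \subseteq L$ with $\Frac(A) = L$ and putting $Z := \Spec A$, the inclusion $L \hookrightarrow k(X)$ corresponds to a dominant rational map $\phi \colon X \dashrightarrow Z$ with $\phi^* k(Z) = L$. Every element of $L$ is $G$-invariant, so $\phi \circ g$ and $\phi$ coincide as rational maps for each $g \in G$, i.e. $\phi$ is a $G$-invariant rational map; a Noetherian spreading-out argument then produces a $G$-invariant dense open $X_0 \subseteq X$ on which $\phi$ restricts to a genuine $G$-equivariant morphism $\phi \colon X_0 \to Z$, with $G$ acting trivially on $Z$. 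Shrinking $X$ to $X_0$ changes neither $k(X)$ nor $L$, so we already have $\phi^* k(Z) = k(X)^G$, which is the last clause of part (b). Finally, using generic flatness together with Chevalley's theorem on constructible images, we may shrink $Z$ and replace $X_0$ by $\phi^{-1}(Z)$ so that $\phi$ is faithfully flat with geometrically integral fibres, each of dimension $\dim X - \dim Z = \trdeg_L k(X)$.

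The crux is the equality $\trdeg_k k(X)^G = \dim X - d$, where $d$ is the maximal dimension of a $G$-orbit; equivalently, $d = \dim X - \dim Z$, so that the (irreducible) generic fibre of $\phi$ has exactly the dimension of a generic orbit. The inequality $\trdeg_k k(X)^G \le \dim X - d$ is the easy direction, since a generic fibre of $\phi$ contains a $d$-dimensional orbit and hence has dimension $\ge d$. For the reverse inequality I would follow Rosenlicht and produce a \emph{rational section} of the orbit map: a locally closed subvariety $S \subseteq X_0$ with $\dim S = \dim X - d$ meeting the generic orbit in a finite nonempty set. When that intersection is a single point — as one can arrange in characteristic zero by taking $X_0$ suitably small and $S$ transversal to the generic orbit — the rational map $X_0 \dashrightarrow S$, $x \mapsto S \cap \overline{G \cdot x}$, identifies $k(X)^G$ with $k(S)$, so $\trdeg_k k(X)^G = \dim S = \dim X - d$; in general one only gets that $k(X)^G$ lies between $\phi^* k(Z)$ and $k(S)$ with purely inseparable defect, but the transcendence degree is still $\dim X - d$. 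I expect the construction of $S$ and the control of this finite multiplicity to be the main obstacle: it is precisely the possible inseparability of $k(X)/k(S)$ here that forces the ``purely inseparable'' hedge in part (b), and in characteristic zero it disappears.

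Granting the crux, $\dim Z = \trdeg_k k(X)^G = \dim X - d$, so the generic fibre $X_\eta$ of $\phi$ is irreducible of dimension $d$ and contains a $d$-dimensional orbit $O$, whence $O$ is dense in $X_\eta$. A second elementary lemma — obtained by letting $R \subseteq X_0 \times X_0$ be the closure of the image of $(g,x) \mapsto (x, gx)$, observing that $R$ minus the genuine orbit graph lies in a proper closed subset, and excising its image under the first projection — shows that after one more shrinking of $X_0$ every orbit meeting $X_0$ is closed in $X_0$; hence $O$, being closed and dense in the irreducible $X_\eta$, is all of $X_\eta$. Spreading this out over $Z$, we may shrink $Z$ once more so that $\phi^{-1}(z)$ is a single closed $G$-orbit for every point $z$ of $Z$; then for any field extension $K/k$ and any $x \in X_0(K)$ the fibre $\phi^{-1}(\phi(x))$ is a $G$-homogeneous space containing $x$, so it equals $G \cdot x$, giving part (a). For the general statement of (b), for \emph{any} $\phi \colon X_0 \to Z$ as in (a) the fibres are $G$-stable, so $\phi^* k(Z) \subseteq k(X)^G$; conversely every $h \in k(X)^G$ is constant on the geometric fibres of $\phi$ (these being the geometric orbits), so some $p$-power of $h$ lies in $\phi^* k(Z)$ by the standard criterion for a rational function to factor through a dominant morphism. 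Thus $k(X)^G / \phi^* k(Z)$ is purely inseparable, and trivial in characteristic zero.
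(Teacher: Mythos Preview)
Your route is sound but genuinely different from the paper's. You take $Z$ to be a model of $k(X)^G$ and then must show the fibres of $\phi$ are single orbits; all the content concentrates in your ``crux'' $\trdeg_k k(X)^G = \dim X - d$, which you attack via a slice $S$. (Your formula $x \mapsto S \cap \overline{G\cdot x}$ hides why this assignment is rational in $x$; the honest version is that transversality makes the action map $G\times S \to X$ smooth at $(e,x_0)$, and a local section followed by projection to $S$ produces the $G$-invariant rational map $X \dashrightarrow S$ you want, whence $k(S)\hookrightarrow k(X)^G$.) The paper instead constructs $\phi$ geometrically first: embed $X \subset \bbP^n$, let $V \subset X\times\bbP^n$ be the closure of $\{(x,g\cdot x)\}$, pass to the flat locus $X_0$ of $\pi\colon V\to X$, and take $\phi\colon X_0 \to \Hilb(n)$ to be the classifying morphism of this flat family. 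Chevalley's theorem gives, after a $G$-invariant shrinking, that each fibre $V_x$ equals $\overline{G\cdot x}$; flatness then forces all $V_x$, hence all orbits in $X_0$, to have the same dimension, so orbits are closed and $\phi$ separates them; part (b) is obtained afterwards from a separate fibre-comparison lemma applied to $X\to Y\to Z$ with $k(Y)=k(X)^G$. The Hilbert-scheme construction is exactly what generalizes to the paper's main dynamical theorem, where there is no group to slice against; your route has the compensating advantage that $\phi^*k(Z)=k(X)^G$ holds by construction rather than only up to a purely inseparable correction. One small gap to patch: in your orbit-closedness step, a proper closed $C\subsetneq R$ can still project onto all of $X_0$, so ``excising $\pi_1(C)$'' is not enough on its own; you need a dimension or flatness argument (as in the paper) to guarantee the excised set is proper.
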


In short, {\em for points $x, y$ in general position in $X$,
distinct $G$-orbits $G \cdot x$ and $G \cdot y$ can be separated 
by rational $G$-invariant functions}. Note, in particular, that
$G$-orbits in $X_0$ are closed in $X_0$. 

The rational map $\phi \colon X \dasharrow Z$, with
$k(Z) = k(X)^G$, is unique up to birational isomorphism. It is
called {\em the rational quotient} for the $G$-action on $X$.
For details and further references on this construction and its 
applications, see~\cite[Chapter 2]{popov-vinberg}.

The purpose of this note is to prove a dynamical version of 
this result, where the algebraic group $G$ is replaced by a
semigroup of dominant rational maps $X \dasharrow X$.  Note 
the semigroup $G$ is not required to have the structure of 
an algebraic variety,  and can be of arbitrary cardinality.
We will say that a closed (or open) subvariety $Y \subset X$
is $g$-invariant, for some $g \in G$, if $g(Y {\bf -} D_g) \subset Y$.
Here $D_g$ is the indeterminacy locus of $g \colon X \dasharrow X$.   
As usual, we will say that $Y$ is $G$-invariant if it is $g$-invariant
for every $g \in G$.
 
\begin{theorem} \label{thm.main}
Let $k$ be a field, $X$ be an irreducible quasi-projective
$k$-variety, and $G$ be a semigroup of dominant rational 
maps $X \dasharrow X$ defined over $k$.

Then there exists a dense open subvariety $X_0$,
a countable collection of closed $G$-invariant subvarieties 
$Y_1, Y_2, \dots \subsetneq X_0$ and a 
dominant morphism $\phi \colon X_0 \to Z$ 
with the following properties.

\smallskip
(a) $\phi \circ g = \phi$, as rational maps $X \dasharrow Z$.

\smallskip
(b) Let $K/k$ be a field extension and $x, y \in X_0(K)$ be $K$-points 
which do not lie in the indeterminacy locus of any $g \in G$,   
or on $Y_i$ for any $i \geqslant 1$. Then
$\phi(x) = \phi(y)$ if and only if 
$\overline{G \cdot x} = \overline{G \cdot y}$ in $X_K$.

\smallskip
(c) The field of invariants $k(X)^G$ is a purely inseparable
extension of $\phi^* k(Z)$. Moreover, one can choose $Z$ and $\phi$ 
so that $\phi^* k(Z) = k(X)^G$ (in characteristic zero, this 
is automatic).

\smallskip
Furthermore, if $G$ is a monoid (i.e., contains the identity 
morphism $X \to X$) then

\smallskip
(d) $X_0$ can be chosen to be $g$-invariant for every invertible 
element $g \in G$, and

\smallskip
(e) If $x \in X_0(K)$ is as in part (b), 
then the fiber $\phi^{-1}(\phi(x))$ of $x$ in $X_0$ equals
$\overline{(G \cdot x) \cap X_0}$.
\end{theorem}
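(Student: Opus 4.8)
The plan is to reduce to Rosenlicht's theorem (Theorem~\ref{thm.rosenlicht-algebraic}) by replacing the semigroup $G$ of rational self-maps with the ordinary algebraic action of the \emph{automorphism group of the field extension} $k(X)/k(X)^G$, or more precisely by extracting from $G$ enough information to pin down the invariant field. The first step is to consider the field of invariants $L := k(X)^G \subset k(X)$, where $f \in k(X)$ is $G$-invariant if $f \circ g = f$ as rational functions for every $g \in G$. Since $k(X)$ is finitely generated over $k$, so is $L$ over $k$ (this needs a short argument: $L$ is algebraically closed in its own... no—rather, use that a subfield of a finitely generated field extension is finitely generated, a theorem essentially due to Rosenlicht itself, or via the Zariski–Nagata argument). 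Choose a model: a variety $Z$ with $k(Z) = L$ and a dominant rational map $\phi \colon X \dashrightarrow Z$ inducing the inclusion $L \hookrightarrow k(X)$. After shrinking $X$ to a dense open $X_0$ we may assume $\phi$ is a morphism. By construction $\phi^* k(Z) = L = k(X)^G$, which gives the second, sharper assertion of part~(c); the purely inseparable statement is automatic once we arrange equality, and in characteristic zero there is nothing more to say.

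Part~(a) is then essentially formal: $\phi \circ g$ and $\phi$ are rational maps $X \dashrightarrow Z$, and they induce the same inclusion on function fields precisely because every function in $L$ is $G$-invariant; two dominant rational maps to the same variety inducing the same field embedding coincide. After further shrinking $X_0$ we can assume $\phi \circ g = \phi$ holds as an honest equality of rational maps for all $g$; but shrinking must be done with care since $G$ can be uncountable—the point is that we do not shrink once per $g$, but rather the identity of rational maps is a closed condition that is automatically inherited on any open subset, so a single shrink making $\phi$ a morphism suffices.

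The heart of the matter is part~(b): showing that for points $x, y$ in sufficiently general position, $\phi(x) = \phi(y)$ if and only if $\overline{G \cdot x} = \overline{G \cdot y}$. The ``only if'' direction—that points in the same fiber of $\phi$ have the same orbit closure for general $x,y$—is where the countable family $Y_1, Y_2, \dots$ enters, and this is the main obstacle. The idea is as follows. A fiber $\phi^{-1}(z)$, for $z$ in general position, is irreducible (generic smoothness / generic flatness: the generic fiber of $\phi$ is geometrically irreducible over $k(Z)$ after passing to the purely inseparable closure, by part~(c)). For such a fiber $F = \phi^{-1}(z)$ and for $g \in G$, the restriction $g|_F$ is again a dominant rational map $F \dashrightarrow F$ (here part~(a), $\phi \circ g = \phi$, is exactly what guarantees $g$ maps $F$ into itself). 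So on the generic fiber $F_\eta$ over the generic point $\eta$ of $Z$, the semigroup $G$ acts by dominant rational self-maps of an irreducible variety over the field $k(Z)$ whose \emph{own} field of $G$-invariants is trivial, i.e.\ equals $k(Z)$: this is the key reduction. Now apply the theorem recursively—or rather, invoke the base case—to conclude that on the generic fiber, $G$ has a dense orbit, equivalently that $\overline{G \cdot x} = F$ for $x$ in a dense open subset of $F$. The countably many bad subvarieties $Y_i \subset X_0$ are assembled from: (i) the locus where $\phi$ fails to be smooth or $F$ fails to be irreducible, pulled back from $Z$; (ii) for each $g$ in a chosen generating subset... no—$G$ may be uncountable, so instead: (ii) the subvariety of $X_0$ swept out by indeterminacy loci, which a Baire-category / generic-point argument shows is contained in a countable union of proper closed subsets (one invokes that $X$ is quasi-projective and Noetherian, and that each ``$x$ lies in $\overline{G \cdot x}$ properly'' is detected by countably many closed conditions coming from a countable cofinal family of finitely generated subsemigroups). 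The ``if'' direction is easier: if $\overline{G \cdot x} = \overline{G \cdot y}$, then since $\phi$ is constant on $G$-orbits by part~(a) and continuous, $\phi$ is constant on the orbit closure, so $\phi(x) = \phi(y)$.

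Finally, parts~(d) and~(e): when $G$ is a monoid, the identity lies in $G$, so for any invertible $g \in G$ with inverse $g^{-1} \in G$, the maps $g, g^{-1}$ are mutually inverse birational automorphisms of $X$; shrinking $X_0$ to $\bigcap_{g \text{ invertible}} (X_0 \cap g(X_0) \cap g^{-1}(X_0))$ would again be an uncountable intersection, so instead one takes $X_0$ to be the largest open set on which all such $g$ restrict to isomorphisms onto their images—this is definable by a generic condition and hence achieved after a single shrink—and then $g(X_0) = X_0$ follows from the symmetry $g \leftrightarrow g^{-1}$. For part~(e), the fiber $\phi^{-1}(\phi(x))$ in $X_0$ is, by part~(b) and the irreducibility of general fibers, exactly the closure in $X_0$ of the orbit of $x$; the containment $\overline{(G\cdot x)\cap X_0} \subseteq \phi^{-1}(\phi(x))$ is clear since $\phi$ is continuous and constant on the orbit, and the reverse containment is the ``only if'' direction of~(b) applied to show the orbit closure fills the (irreducible) fiber. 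The main obstacle throughout is the bookkeeping forced by $G$ having arbitrary cardinality: every step that would naively require a shrink ``for each $g \in G$'' must instead be recast as a single generic condition, or the bad locus must be shown to be swept out by only countably many proper closed subvarieties.
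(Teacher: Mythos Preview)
Your overall strategy differs from the paper's. You define $\phi$ as the rational map to a model $Z$ of $k(X)^G$, which makes (a) and (c) essentially formal, and then attempt to prove (b) by restricting to the generic fiber. The paper instead builds $\phi$ geometrically: it embeds $X\subset\bbP^n$, forms $V\subset X\times\bbP^n$ as the closure of the union of the graphs of all $g\in G$, takes $X_0$ to be the flat locus of $\pi\colon V\to X$, and defines $\phi\colon X_0\to\Hilb(n)$ by $x\mapsto[V_x]$. The countable family $\{Y_d\}$ arises as explicit rank-drop loci of certain infinite matrices built from monomials in the sections $s_g$ (Proposition~\ref{prop.sections}), guaranteeing that $V_x=\overline{G\cdot x}$ for $x$ off $\bigcup_d Y_d$; part~(b) then follows at once from the equivalence $\phi(x)=\phi(y)\iff V_x=V_y$.

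The gap in your proposal is the step ``apply the theorem recursively---or rather, invoke the base case---to conclude that on the generic fiber, $G$ has a dense orbit.'' There is no base case available: the claim that a variety $F$ with $k(F)^G$ equal to its ground field has dense $G$-orbits at points in very general position is exactly the special case $Z=\mathrm{pt}$ of the theorem you are trying to prove, and it is not easier than the general case. (The paper in fact deduces that statement, over uncountable algebraically closed fields, as a \emph{corollary} of the theorem; see Corollary~\ref{cor.medvedev-scanlon-uncountable}.) Observing that the $G$-orbit of the generic point of $F_\eta$ is dense is a tautology---each $g$ is dominant and hence fixes the generic point---and says nothing about $K$-points. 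Spreading any such statement out to nongeneric fibers is precisely what requires an explicit construction of the $Y_i$, and your sketch in item~(ii) (``a countable cofinal family of finitely generated subsemigroups'') does not supply one: an arbitrary semigroup need not be such a union, and in any case no mechanism is given for turning this into countably many proper closed conditions on $X$. Without an independent argument at this point, the proof is circular.
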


In short, {\em for points $x$, $y$ in very general position in $X$,
distinct orbit closures $\overline{G \cdot x}$ and $\overline{G \cdot y}$
can be separated by rational $G$-invariant functions}. Here, as usual, 
``very general position" means ``in a countable intersection of dense 
open subsets".  Several remarks are in order.

\smallskip
(1) In the case where $G \isomto \mathbb{N}$ is generated by a single 
dominant rational map $\sigma \colon X \dasharrow X$
and $k = \mathbb C$,
Theorem~\ref{thm.main} was proved by E.~Amerik and 
F.~Campana~\cite[Theorem 4.1]{Amerik-Campana} by using
techniques of K\"ahler geometry.  Our proof of
Theorem~\ref{thm.main} is purely algebraic;
in particular, it is valid in prime characteristic.

\smallskip
(2) A conjecture of A.~Medvedev 
and Th.~Scanlon~\cite[Conjecture 7.14]{medvedev-scanlon}
asserts that in the case where $k$ is algebraically closed of characteristic $0$,
$G \isomto \mathbb N$ is generated by a single regular endomorphism $X \to X$ and
$k(X)^G = k$ (i.e., $Z$ is a point),
$X$ has a $k$-point with a dense $G$-orbit.
Over $\mathbb{C}$ the Medvedev-Scanlon conjecture follows from
the above-mentioned~\cite[Theorem 4.1]{Amerik-Campana}.
In the case where $k$ is any algebraically closed
uncountable field (of arbitrary characteristic), it was proved by the first author, D. Rogalski 
and S. Sierra~\cite[Theorem~1.2]{BRS}.  
We will reprove it in a stronger form (for an arbitrary semigroup $G$) 
as Corollary~\ref{cor.medvedev-scanlon-uncountable} below. 

Over a countable field, the Medvedev-Scanlon conjecture (which was, 
in turn, motivated by an earlier related conjecture 
of S.-W.~Zhang~\cite[Conjecture 4.1.6]{zhang-distributions}) remains
largely open. It has been established only in a small number of special 
instances (see, in particular,~\cite[Theorem 7.16]{medvedev-scanlon} 
and~\cite[Theorem 1.3]{recent-surface}), and no counterexamples 
are known.

\smallskip
(3) While we impose no restriction on the cardinality of the semigroup
$G$ in Theorem~\ref{thm.main}, the situation can be partially
reduced to the case, where $G$ is countable in the following sense.
In the setting of Theorem~\ref{thm.main} there always
exists a countable subsemigroup $H$ of $G$ such that $k(X)^G=k(X)^H$, 
and $\overline{G\cdot x}=\overline{H\cdot x}$ for $x \in X$ 
in very general position and away from the indeterminacy loci 
of every $g\in G$. For a precise statement, see 
Corollary~\ref{cor.countable}. On the other hand, there may not 
exist a finitely generated subgroup $H \subset G$ with these properties; 
see Example~\ref{ex.finitely-generated}. 

\smallskip
(4) If $X$ is not quasi-projective, Theorem~\ref{thm.main} can still 
be applied to any quasi-projective dense open subvariety 
$X' \subset X$. Note however, that replacing $X$ by a dense open 
subvariety $X'$ may make the condition on $x, y \in X_0(K)$ in part (b) 
more stringent by enlarging the indeterminacy loci of 
the elements $g \in G$, which we now view as dominant rational maps
$X' \dasharrow X'$.  

\smallskip
(5) The idea behind our construction of a rational map 
$\phi$ which separates $G$-orbit closures in very general position
in $X$ is as follows. Since $X$ is quasi-projective, we may 
assume that $X \subset \bbP^n$ for some $n \ge 1$.
We then set $\phi(x)$ to be to the class of the orbit closure 
$\overline{G \cdot x} \subset \bbP^n$ in the Hilbert scheme
$\Hilb(n)$  of subvarieties of $\bbP^n$.
The challenge is, of course, to show that this 
defines a rational map $\phi \colon X \dasharrow \Hilb(n)$.  
The ``quotient variety" $Z$ will then be defined as the closure
of the image of this map in $\Hilb(n)$.

Note that our proof of Theorem~\ref{thm.main} may be viewed as
an enhanced version of Rosenlicht's proof of 
Theorem~\ref{thm.rosenlicht-algebraic} in~\cite{rosenlicht}, where 
the ``separating map" $\phi$ sends $x \in X$ to the class of 
the orbit closure $\overline{G \cdot x}$ in the appropriate 
Chow variety. Leaving aside the difference between the Hilbert 
scheme and the Chow variety, we note that, as one may expect, the orbit 
closure $\overline{G \cdot x}$ varies 
more predictably with $x$ in the case of algebraic group actions than 
it does when $G$ is an arbitrary semigroup of dominant endomorphisms 
$X \dasharrow X$. For this reason, the construction of $\phi$
requires extra work in the dynamical setting 
(which is carried out in Section~\ref{sect.sections}), and
the resulting map only separates orbit closures in very general
position. In the last section we show how our arguments can be 
modified to yield a proof of 
Theorem~\ref{thm.rosenlicht-algebraic}.

\smallskip
(6) When it comes to separating orbit closures by rational invariants 
in the dynamical setting of Theorem~\ref{thm.main},
``very general position" is, indeed, the best one can do, even 
in the simplest case, where $G \isomto \mathbb N$ is a group generated by a single 
dominant morphism $\sigma \colon X \to X$.
Examples~\ref{ex1} and \ref{ex2} show that
if we replace the countable collection 
of $\{ Y_i, \, i \geq 1 \}$ of proper subvarieties of $X$ by 
a finite collection, Theorem~\ref{thm.main} will fail.
Note that in Example~\ref{ex1}, $\sigma$ is an automorphism.

\smallskip
(7) The problem of algorithmically computing a generating set 
for $k(X)^G$ (over $k$) for an action of a linear algebraic 
group  $G$ on a variety $X$, has received a lot of attention.  
For an overview and further references, see, 
e.g.,~\cite[Introduction]{kemper}.
We are not aware of any similar algorithms 
in the dynamical setting of Theorem~\ref{thm.main}, even when 
$G$ is generated by a single dominant morphism $X \to X$.

\section{A dense set of rational sections}
\label{sect.sections}

In this section we will consider the following situation:
\[ \xymatrix@R=4mm{ 
V_x \ar@{->}[ddd]_{\pi \;} & \subset & V   \ar@{->}[ddd]_{\pi \; } \\ 
 & & \\ 
 & & \\ 
x & \in & \; X \ar@{-->}@/_.7pc/[uuu]
\ar@{-->}@/_1.5pc/[uuu]_{\; \dots } 
\ar@{-->}@/_3pc/[uuu]_{\; \; s_{\lambda}, \; \lambda \in \Lambda} 
\; .} \] 
Here $V$ and $X$ are $k$-varieties, $X$ is irreducible, 
$\pi \colon V \to X$ is a $k$-morphism, and
$s_{\lambda} \colon X \dasharrow V$ is a collection of 
rational sections $X \dasharrow V$, indexed by a  
set $\Lambda$. The fiber $\pi^{-1}(x)$ of a point $x \in X$ 
will be denoted by $V_x$.
Note that we do not assume that $V$ is irreducible and do not
impose any restrictions on the cardinality of $\Lambda$.

If $K/k$ is a field extension, it will be convenient for us to denote 
by $X(K)'$ the collection of $K$-points of $X$ lying away from the 
indeterminacy locus of $s_{\lambda}$, for 
every $\lambda \in \Lambda$.  In other words, for $x \in X(K)'$,
$s_{\lambda}(x)$ is defined for every $\lambda \in \Lambda$. 
Note that if $\Lambda$ is large enough, 
$X(K)'$ may be empty for some fields $K/k$, even 
if $K$ is algebraically closed. On the other hand, 
the generic point $x_{\rm gen}$ of $X$ lies in $X(K_{\rm gen})'$,
where $K_{\rm gen} = k(X)$ is the function field of $X$.

\begin{proposition} \label{prop.sections}
Assume that the union of $s_{\lambda}(X)$ over all 
$\lambda \in \Lambda$ is dense in $V$.
Then there exists a countable collection $\{ Y_i, \, i \geqslant 1 \}$
of proper subvarieties of $X$ with the following property:
For any field extension $K/k$ and $x \in X(K)'$ 
away from $\bigcup_{i= 1}^{\infty} Y_i$, the set
\[ \{ s_{\lambda}(x) \, | \, \lambda \in \Lambda \} \]
is Zariski dense in the fiber $V_x := \pi^{-1}(x)$.
\end{proposition}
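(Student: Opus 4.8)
The plan is to reduce the problem to the generic fiber and then spread out. First I would pass to the generic point: let $\eta = x_{\rm gen}$ be the generic point of $X$ and $K_{\rm gen} = k(X)$. Each section $s_\lambda$ is defined at $\eta$, so it produces a $K_{\rm gen}$-point $s_\lambda(\eta) \in V_\eta = \pi^{-1}(\eta)$. Let $W \subseteq V_\eta$ be the Zariski closure of $\{ s_\lambda(\eta) : \lambda \in \Lambda\}$ in the $K_{\rm gen}$-scheme $V_\eta$. I claim $W = V_\eta$. Indeed, if $W$ were a proper closed subvariety of $V_\eta$, then by spreading out, there would be a dense open $U \subseteq X$ and a proper closed subscheme $\mathcal{W} \subsetneq \pi^{-1}(U)$ whose generic fiber over $\eta$ is $W$; but then every $s_\lambda$, being a rational section whose generic value lands in $W$, would satisfy $s_\lambda(U') \subseteq \mathcal{W}$ for a suitable dense open $U' \subseteq U$ (depending on $\lambda$). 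Taking closures, $\overline{s_\lambda(X)} \subseteq \overline{\mathcal{W}} \subsetneq V$ for every $\lambda$ (here using that $\overline{\mathcal{W}}$ is a proper closed subset of $V$ since it misses the generic points of the components of $V$ dominating $X$ through $\pi^{-1}(U)$ — one has to be mildly careful about components of $V$ not dominating $X$, but those can be discarded at the outset). This contradicts the density hypothesis $\overline{\bigcup_\lambda s_\lambda(X)} = V$. Hence $W = V_\eta$.

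Next I would build the countable family $\{Y_i\}$. The idea is that ``$\{s_\lambda(x)\}$ is dense in $V_x$'' is a constructible-type condition that holds generically, and the failure locus is a countable union of proper subvarieties because $V_\eta$ is Noetherian but $\Lambda$ may be infinite. Concretely: choose a countable cofinal system. Since $V_\eta$ is a Noetherian scheme, the closure $W$ of the (possibly uncountable) set $\{s_\lambda(\eta)\}$ is already the closure of some countable subset $\{s_{\lambda_n}(\eta) : n \geqslant 1\}$ — pick points greedily so that each new point escapes the closure of the previous ones, a process that terminates or continues countably by the Noetherian property, and throw in extras so that the countable subcollection has the same closure $W = V_\eta$. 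Now fix this countable index set $\{\lambda_n\}$. For each $n$, the rational section $s_{\lambda_n} \colon X \dasharrow V$ is a morphism on a dense open $U_n \subseteq X$; let $Y^{(0)}$ be (the components of) the complement $X \setminus \bigcap_n U_n$, a countable union of proper subvarieties. On $U := \bigcap_n U_n$ we get genuine sections, and $x \mapsto \overline{\{s_{\lambda_n}(x)\}}$ is a family of closed subschemes of the fibers of $\pi$.

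The remaining step is to identify the proper subvarieties $Y_i$ off which this family has full fibers. By spreading out the equality $\overline{\{s_{\lambda_n}(\eta)\}} = V_\eta$: there is a dense open $U' \subseteq U$ such that, setting $\mathcal{Z} \subseteq \pi^{-1}(U')$ to be the scheme-theoretic closure of $\bigcup_n s_{\lambda_n}(U')$, the inclusion $\mathcal{Z} \hookrightarrow \pi^{-1}(U')$ is an isomorphism over $U'$ — or more precisely, so that the closed subset $\pi^{-1}(U') \setminus \mathcal{Z}$ does not dominate $U'$ via $\pi$. (This uses only that its generic fiber over $\eta$ is empty, together with Chevalley's theorem on images of constructible sets applied to $\pi$ restricted to that closed set; the image is constructible and misses $\eta$, hence is contained in a proper closed subset.) Let $Y^{(1)}$ be the components of that proper closed subset, together with $X \setminus U'$. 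Then for $x \in X(K)'$ avoiding all components of $Y^{(0)}$ and $Y^{(1)}$ — call this countable collection $\{Y_i\}_{i \geqslant 1}$ — the fiber $\mathcal{Z}_x$ is all of $V_x$, i.e.\ $\overline{\{s_{\lambda_n}(x) : n\}} = V_x$, and a fortiori $\overline{\{s_\lambda(x) : \lambda \in \Lambda\}} = V_x$. I expect the main obstacle to be the two spreading-out arguments: one must check that ``the closure of an infinite union of sections'' commutes with base change to a general $K$-point $x$, which is exactly why a countable cofinal subfamily $\{\lambda_n\}$ must be extracted first (an infinite intersection of the open loci where ``enough'' sections are defined is in general only a countable intersection, not open — hence the countable, rather than finite, collection $\{Y_i\}$), and one must be attentive to components of $V$ that fail to dominate $X$, discarding them before invoking the density hypothesis.
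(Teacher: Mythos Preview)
You correctly flag the key obstacle---whether ``closure of an infinite union of sections'' commutes with base change to a $K$-point $x$---but your proposed remedy (extracting a countable cofinal subfamily $\{\lambda_n\}$) does not address it. The gap is at the step where you write ``the fiber $\mathcal{Z}_x$ is all of $V_x$, i.e.\ $\overline{\{s_{\lambda_n}(x)\}} = V_x$'': the fiber $\mathcal{Z}_x$ of the global closure $\mathcal{Z} = \overline{\bigcup_n s_{\lambda_n}(U')}$ can be strictly larger than the closure of $\{s_{\lambda_n}(x)\}$ inside $V_x$. Concretely, take $X = \mathbb{A}^1_k$ with coordinate $t$, $V = X \times \mathbb{A}^1$, and sections $s_n(t) = (t, nt)$ for $n \geqslant 0$. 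Every $s_n$ is everywhere regular and the lines $\{y = nt\}$ are Zariski dense in $V$, so $\mathcal{Z} = V$ and your recipe produces an \emph{empty} exceptional locus. But at $x = 0$ all the $s_n$ collapse to the single point $(0,0)$, so $\{s_n(0)\}$ is not dense in $V_0 \cong \mathbb{A}^1$. The countability of the family is irrelevant here; the failure is intrinsic to passing from a closure in the total space to fiberwise closures.

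The paper repairs this by working degree by degree with an explicit rank condition rather than with a single global closure. After embedding $V$ into $X \times \mathbb{A}^n$ and restricting to the flat locus of $\pi$ (so that the Hilbert function $h(d)$ of $V_x$ is constant in $x$), one forms for each $d$ the matrix $A_{\rm gen} = \bigl(M_j(s_\lambda)\bigr)$ with columns indexed by the monomials $M_1,\dots,M_l$ of degree $\leqslant d$ and rows by $\lambda \in \Lambda$. Over $k(X)$ the kernel of $A_{\rm gen}$ is exactly $I(V_{x_{\rm gen}})[d]$, of dimension $h(d) = l - r$ where $r = \operatorname{rank}(A_{\rm gen})$. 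The proper closed subvariety $Y_d \subset X$ is then the locus where the evaluated matrix has rank $< r$; for $x \notin Y_d$ one gets $\dim I(W_x)[d] = l - r = h(d) = \dim I(V_x)[d]$, hence $I(W_x)[d] = I(V_x)[d]$, and avoiding $\bigcup_{d \geqslant 1} Y_d$ forces $W_x = V_x$. In the example above, for $d=1$ the matrix has rows $(1, nt)$, generic rank $2$, and $Y_1 = \{t = 0\}$---exactly the bad point your argument missed. Thus the countable family $\{Y_d\}$ arises from the grading by $d$, not from indeterminacy loci of the sections.
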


\begin{proof} We begin with two reductions. 

Since $k(V)$ is finitely generated over $k(X)$, there is 
a closed subvariety $V' \subset X \times \bbA^n$ and 
a birational isomorphism $j \colon V \dasharrow V'$ over $X$.
Note that replacing $V$ by $V'$ necessitates the removal of 
the sections $s_\lambda \colon X \dasharrow V$ whose image lies 
in the indeterminacy locus of the birational isomorphism 
$j$. This will not present a problem for us, since the union 
of the images of the remaining sections will still be dense in $V$. 

Secondly, by generic flatness (see, e.g.,~\cite[Theorem 14.4]{eisenbud}),
after replacing $X$ by a dense open subvariety, 
we may assume that $\pi$ is flat. Consequently,
the Hilbert function of the fiber $V_x$ is the same for every
$K$-point $x \in V_x$ and every field $K/k$; see, 
e.g.,~\cite[Exercise 6.11]{eisenbud}.  Let us denote this 
common Hilbert function by $h(d)$. That is,
if $I_x \subset K[t_1, \dots, t_n]$ is the ideal 
of $V_x$ in $\bbA^n$, and $I_x[d]$ is the $K$-vector space 
of homogeneous polynomials of degree 
$d$ in $I_x$, then $\dim_K(I_x[d]) = h(d)$.

With these reductions in place, we are ready to proceed with 
the proof.  Let $K/k$ be a field and $x \in X(K)'$.  Denote by $W_{x}$
the closure of $\{ s_{\lambda}(x) \, | \, \lambda \in \Lambda \}$ in $V$. 
Clearly 
\begin{equation} \label{e.inclusion}
W_x \subset V_x \subset \bbA^n_K \, .
\end{equation} 
Let $I(V_x)$ and $I(W_x) \subset K[t_1, \dots, t_n]$ 
be the ideals of $V_x$ and $W_x$ in $K[t_1, \dots, t_n]$, 
respectively. We will denote the $K$-vector space of polynomials
in $t_1, \dots, t_n$ of total degree $\leqslant d$
contained in $I(V_x)$ by $I(V_x)[d]$, and similarly for 
$I(W_x)[d]$. Note that by the above assumption on flatness, 
$\dim(I(V_x)[d]) = h(d)$
depends only on $d$ and not on the choice of $x$.

By~\eqref{e.inclusion}, 
 $I(V_x) \subset I(W_x)$. We claim that for every $d \geqslant 1$
there exists a proper closed subvariety $Y_d \subset X$ such that
$\dim(I(W_x)[d]) = h(d)$ for any field $K/k$ and any $x \in X(K)'$ 
away from $Y_d$.  If we can prove this claim, 
then for $x \in X(K)'$ away from $Y_1 \cup Y_2 \cup \dots$,
\[ I(W_x)[d] = h(d) = I(V_x)[d] \] 
for every $d \geq 1$ and thus $I(W_x) = I(V_x)$, so that
$W_x = V_x$, as desired.

To prove the claim, let us fix $d \geqslant 1$ and denote the distinct 
monomials in $t_1, \dots, t_n$ of degree $\leqslant d$ by $M_1, \dots, M_{l}$.
(Here $l = \binom{n + d}{d}$, but we shall not use this in the sequel.) 
Let $x_{\rm gen} \in X(K_{\rm gen})'$ be the generic point of
$X$, where $K_{\rm gen} = k(X)$ is the function field.
An element $\alpha(t_1, \dots, t_n) = \sum_{i = 1}^l \alpha_i M_i 
\in K_{\rm gen}[t_1, \dots, t_n]$ lies in $I(V_{x_{\rm gen}})[d]$ if and 
only if $\alpha$ vanishes on $V$. Since the images of the rational
sections $s_{\lambda} \colon X \dasharrow \bbA^n$  are dense in $V$, 
this is equivalent to $\alpha(s_{\lambda}) = 0$
in $k(X)$, for each $\lambda \in \Lambda$. The latter condition is,
in turn, equivalent to requiring $\alpha_1, \dots, \alpha_l \in K_{\rm gen}$
to satisfy the system
\begin{equation} \label{e.inf-system}  
\left\{ 
\begin{matrix}
\alpha_1 M_1(s_{\lambda}) + \alpha_2 M_2(s_{\lambda}) + \dots + 
\alpha_l M_{l}(s_{\lambda}) = 0 \\
\lambda \in \Lambda
\end{matrix} \right. 
\end{equation}
of linear equations over $K_{\rm gen} = k(X)$. 
Thus $h(d) := \dim_{K_{\rm gen}}(I_{x_{\rm gen}}[d])$, is the dimension
of the solution space to this system. If we denote the 
matrix of the system~\eqref{e.inf-system} by
\begin{equation} \label{e.inf-matrix}
A_{\rm gen} := \begin{pmatrix}
M_1(s_{\lambda}) & M_2(s_{\lambda}) & \dots & 
M_{l}(s_{\lambda}) \\ \lambda \in \Lambda
\end{pmatrix} \, ,   
\end{equation}
then $h(d) = l - r$, where $r := \rank(A_{\rm gen})$.
Here the columns of $A_{\rm gen}$ are indexed by $1, \dots, l$ and
the rows by $\lambda \in \Lambda$. (In particular, if $\Lambda$ 
happens to be uncountable, $A_{\rm gen}$ will have uncountably many rows.)

We now define $Y_{d} \subset X$ by the condition that
$\rank(A_{\rm gen}) < r$. 
That is, $Y_d$ is the intersection of the closed subvarieties of $X$
cut out by the system of equations 
\[  \left\{ 
\begin{matrix}
\det(B) = 0 \\
\text{$B$ ranges over the $r \times r$-submatrices of $A_{\rm gen}$.}  
\end{matrix} \right. \]
By our assumption, at least one of the $r \times r$-submatrices of
$A_{\rm gen}$ is non-singular.  Thus $Y_d$ is a proper closed subvariety
of $X$ defined over $k$.

Now suppose $K/k$ is a field extension and $x \in X(K)'$, as before.
A polynomial
$\beta(t_1, \dots, t_n) = \sum_{i = 1}^{l} \beta_i M_i 
 \in K[t_1, \dots, t_n]$ lies in $I(W_x)[d]$ 
if and only if $\beta_1, \dots, \beta_{l} \in K$ satisfy 
the linear system
\begin{equation} \label{e.inf-system2}  
\left\{ 
\begin{matrix}
\beta_1 M_1(s_{\lambda}(x)) + \beta_2 M_2(s_{\lambda}(x)) + \dots + 
\beta_l M_{l}(s_{\lambda}(x)) = 0 \\
\lambda \in \Lambda
\end{matrix} \right. 
\end{equation}
with coefficients in $K$. The matrix $A_x$ of this system is 
obtained from $A_{\rm gen}$ by evaluating each entry at $x$. (Note 
that it makes sense to evaluate an entry of $A_{\rm gen}$
at $x \in X(K)'$. Indeed, $x$ lies in the domain of each $s_{\lambda}$
and every entry in $A_{\rm gen}$ is a monomial in the coordinates of 
$s_{\lambda}(x) \in V_x \subset \bbA^n_K$.) 
Thus $\dim(I(W_x)[d]) = l - \rank(A_x)$.
If we choose $x$ away 
from $Y_d$, then $\rank(A_x) = \rank(A_{\rm gen}) = r$ and 
\[ \dim(I(W_x)[d]) = l - \rank(A_x) = l - r = h(d) = \dim(I(V_x)[d]) \, , \]
as desired.
\end{proof}

\section{The Hilbert scheme}
\label{sect.hilbert-scheme}

The Hilbert scheme $\Hilb(n)$, constructed 
by A.~Grothendieck~\cite{grothendieck}, classifies closed
subvarieties of $\bbP^n$ in the following sense. 
A family of subvarieties of $\bbP^n$ parametrized by
a scheme $X$ is, by definition, a closed subvariety 
\[ V \subset X \times \bbP^n \]
such that the projection $V \to X$ to the first factor is flat.
Families of subvarieties of $\bbP^n$ parametrized by $X$  
are in a natural (functorial in $X$) bijective correspondence
with morphisms $X \to \Hilb(n)$.  Note that $\Hilb(n)$ is
not a Noetherian scheme, it is only locally Noetherian. 
This will not present a problem for us though, because
$\Hilb(n)$ is a disjoint union of (infinitely many) 
schemes of the form $\Hilb(n, p(d))$, 
where $p(d)$ is a Hilbert polynomial, and 
each $\Hilb(n, p(d))$ is a projective variety defined over $\bbZ$.
If $X$ is an irreducible variety, and $V \to X$ is 
a family of subvarieties of $\bbP^n$, as above, then 
the image of the morphism $X \to \Hilb(X)$ associated to this family
lies in $\Hilb(n, p(d))$, where $p(d)$ is the Hilbert polynomial of any
fiber of $V \to X$. (Since the morphism $V \to X$ is assumed to be flat,
every fiber will have the same Hilbert polynomial.) 

We are now ready to proceed with the proof of Theorem~\ref{thm.main}.
In this section we will construct a dense open subset $X_0 \subset X$, 
a dominant morphism $\phi \colon X_0 \to Z$, and a countable collection
of proper $k$-subvarieties $Y_d \subset X_0$. We will check that
each $Y_d$ is $G$-invariant and defer the rest of the proof of 
Theorem~\ref{thm.main} to the next two sections.

By our assumption $X$ is a quasi-projective variety. In other words,
$X$ is a locally closed subvariety of some projective space $\bbP_k^n$. 
Let $V \subset X \times \bbP^n$ be the Zariski closure of the union of
the graphs of $g \colon X \dasharrow X \subset \bbP^n$, as 
$g$ ranges over $G$. Let $\pi \colon V \to X$ be the projection 
\begin{equation} \label{e.graph}
\xymatrix{  
V := \overline{\{ (x, g(x)) \, | \, x \in X \, , \; 
g \in G \}}  \ar@{->}[d]_{\pi} & \subset X \times \bbP^n  \\
X   } 
\end{equation}
to the first factor. Let $X_0 \subset X$ be the flat locus of $\pi$, i.e.  
the largest dense open subset of $X$ over which $\pi$ is flat.
(Recall that $X_0$ is dense in $X$ by generic flatness.) 
Let $V_0 := \pi^{-1}(X_0)$. 
We now view $V_0 \subset X_0 \times \bbP^n$ as a family of
subvarieties of $\bbP^n$ parametrized by $X_0$.
By the universal property of the Hilbert scheme $\Hilb(n)$, 
this family induces a morphism $\phi \colon X_0 \to \Hilb(n)$.
Denote the closure of the image of this morphism by $Z$.
If $K/k$ is a field extension and $x, y \in X_0(K)$ then
by our construction
\begin{equation} \label{e.fiber2}
\text{$\phi(x) = \phi(y)$ if and only if $V_x = V_y$.}
\end{equation}
Here we identify $\{ x \} \times \bbP^n_{K}$ and 
$\{ y \} \times \bbP^n_{K}$ with  $\bbP^n_{K}$. 

Each $g \in G$ gives rise to the (regular) section $s_g \colon X_0
\to V_0$ given by $x \mapsto (x, g(x))$, as $g$ ranges over $g$.
By the definition of $V$, the union of the images of these sections 
is dense in $V_0$.  Thus by Proposition~\ref{prop.sections}
there exists a countable collection
of proper $k$-subvarieties $Y_i \subset X_0$,
such that for any field extension $K/k$ and any
$x \in X_0(K)$ away from the union of these subvarieties,
\begin{equation} \label{e.fiber}
\text{$V_x = \overline{G \cdot x}$ in $\bbP^n_K$.}
\end{equation}

\begin{lemma} \label{lem.Y_d} 
Each subvariety $Y_d \subset X_0$ is $G$-invariant.
\end{lemma}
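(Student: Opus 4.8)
The plan is to extract from the proof of Proposition~\ref{prop.sections} an explicit description of $Y_d$ and to combine it with a monotonicity property of orbit closures under $G$.

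Recall first how $Y_d$ arises. After the two reductions of that proof one has the matrix $A_{\mathrm{gen}}$ of~\eqref{e.inf-matrix}, with rows indexed by the elements of $G$, the row of $h \in G$ having entries the monomials $M_j(s_h) \in k(X)$; its rank over $k(X)$ is $r$, and $Y_d$ is the locus in $X_0$ on which this rank drops below $r$. Equivalently, for any field extension $K/k$ and any $x \in X_0(K)$ lying in the domain of every $s_h$ (that is, $x \in X(K)'$ for the present collection of sections), one has $x \in Y_d$ if and only if $\dim_K I(W_x)[d] > h(d)$, where $W_x := \overline{G \cdot x}$, $V_x := \pi^{-1}(x)$, and $h(d) := \dim I(V_x)[d]$ is independent of $x$ by generic flatness; note that $\dim I(W_x)[d] \geqslant h(d)$ always, since $W_x \subseteq V_x$ and hence $I(V_x) \subseteq I(W_x)$.

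The key observation is a monotonicity of orbit closures. Fix $g \in G$ and a point $x \notin D_g$. For every $h \in G$ with $g(x) \notin D_h$ we have $h(g(x)) = (h \circ g)(x)$, and $h \circ g \in G$ since $G$ is a semigroup; hence $h(g(x)) \in G \cdot x$. Taking Zariski closures,
\[ W_{g(x)} \;=\; \overline{G \cdot g(x)} \;\subseteq\; \overline{G \cdot x} \;=\; W_x , \]
so $I(W_x) \subseteq I(W_{g(x)})$ and therefore $\dim I(W_{g(x)})[d] \geqslant \dim I(W_x)[d]$ for every $d$. It is precisely because $G$ is closed under post-composition by $g$ --- so that the orbit closure can only \emph{shrink} along the dynamics --- that this argument yields $g(Y_d) \subseteq Y_d$ and not the opposite inclusion $g^{-1}(Y_d) \subseteq Y_d$.

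Combining the two facts: if $x \in Y_d$, $x \notin D_g$, and both $x$ and $g(x)$ lie in the domains of all the sections $s_h$, then $\dim I(W_{g(x)})[d] \geqslant \dim I(W_x)[d] > h(d)$, so $g(x) \in Y_d$. To promote this pointwise statement to the asserted inclusion $g(Y_d {\bf -} D_g) \subseteq Y_d$ of closed subvarieties, I would argue componentwise: since $Y_d$ is closed and $g$ restricts to a morphism on $X {\bf -} D_g$, it suffices to check, for each irreducible component $Y \subseteq Y_d$ with $Y \not\subseteq D_g$, that $g$ carries the generic point $\xi$ of $Y$ into $Y_d$. By Noetherianity $Y_d$ is already cut out by finitely many of the $r \times r$ minors of $A_{\mathrm{gen}}$, and each such minor involves only $r$ rows, hence only finitely many sections $s_h$; on a dense open subset of $X$ meeting both $Y$ and $\overline{g(Y)}$ these minors are regular, and there the computation becomes literal: such a minor evaluated at $g(\xi)$, using $h(g(\xi)) = (h \circ g)(\xi)$, equals the $r \times r$ minor of $A_\xi$ obtained by relabelling its rows $h \mapsto h \circ g \in G$, which vanishes because $\xi \in Y_d$. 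Hence $g(\xi) \in Y_d$, as desired. The step I expect to be the main obstacle is exactly this last bookkeeping --- verifying that the finitely many sections appearing in the chosen defining minors are defined along the components of $Y_d$ and along their $g$-images, so that the matrix identity ``$\det B$ at $g(\xi)$ is a minor of $A_\xi$'' is valid; granted that, the conclusion is immediate from the monotonicity above.
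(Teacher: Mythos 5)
Your proof is correct and is essentially the paper's argument: the monotonicity $\overline{G\cdot g(x)}\subseteq\overline{G\cdot x}$ coming from closure of $G$ under post-composition is exactly the content of the paper's identity $B_{i_1,\dots,i_r,g_1,\dots,g_r}(gx)=B_{i_1,\dots,i_r,g_1g,\dots,g_rg}(x)$, which you yourself write down in your final paragraph. The paper simply checks this minor identity pointwise (and is no more careful than you about domains of definition), so your additional generic-point/Noetherianity bookkeeping is a harmless refinement rather than a different route.
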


\begin{proof} We revisit the construction of $Y_d$ in the proof of
Proposition~\ref{prop.sections}. For the first reduction 
at the beginning of that proof, note that
$V$ is, by definition, a subvariety of $X \times \bbP^n$.
Thus we may choose an affine subspace $\bbA^n \subset \bbP^n$, 
so that $X$ intersects $\bbA^n$ non-trivially and replace $V$ by 
$V \cap \left( X\times \A^n \right)$.
The second reduction is carried out by passing from $X$ 
to $X_0$.

With these two reductions in place, $Y_d \subsetneq X_0$
was constructed as follows. Let $t_1, \dots, t_n$ 
be the affine coordinates on $\bbA^n$ and
$M_1, \dots, M_l$ be the distinct monomials of degree $d$ 
in $t_1, \dots, t_n$. Let 
\[ A_{\rm gen} := \begin{pmatrix}
M_1(g) & M_2(g) & \dots & 
M_{l}(g) \\ g \in G
\end{pmatrix} \, , \]
where we view $g$ as a rational function $X \dasharrow X \cap \bbA^n \subset 
\bbA^n$. Let $r$ be the rank of $A_{\rm gen}$. 
The subvariety $Y_{d} \subset X$ is then defined by the condition that
$\rank(A_{\rm gen}) < r$.  That is, $Y_d$ is the intersection 
of the closed subvarieties of $X_0$
cut out by the system of equations 
\[  \left\{ 
\begin{matrix}
\det(B_{i_1, \dots, i_r, g_1, \dots, g_r}) = 0 \\
\text{$1 \leqslant i_1 < \dots < i_r \leqslant l$ and $g_1, \dots, g_r$
are distinct elements of $G$.} 
\end{matrix} \right. \]
Here $B_{i_1, \dots, i_r, g_1, \dots, g_r}$ is the
$r \times r$-submatrix of $A_{\rm gen}$ consisting of
columns $i_1, \ldots, i_r$ and rows indexed by $g_1, \dots, g_r$.  

To prove the lemma we need to check that if a point 
$x \in X_0(K)$ lies on $Y_d$ then so does $g(x)$ 
whenever $g(x)$ is defined. Here $K/k$ is a field extension.
Indeed, 
\[ B_{i_1, \dots, i_r, g_1, \dots, g_r}(gx) = 
B_{i_1, \dots, i_r, g_1 g, \dots, g_r g}(x) \, . \]
If $x \in Y_d$ then the matrix on the right hand side is singular
for any choice of $1 \leqslant i_1, \dots, i_r \leqslant l$ and 
$g_1, \dots, g_r \in G$, as desired.
\end{proof}

\section{Conclusion of the proof of Theorem~\ref{thm.main}(a), (b), (d) and (e)}
\label{sect.main}

(b) By~\eqref{e.fiber2}, $\phi(x) = \phi(y)$ if and only if $V_x = V_y$ in 
$\bbP^n_K$.  By~\eqref{e.fiber}, 
$V_x = \overline{G \cdot x}$, $V_y = \overline{G \cdot y}$, where the closure
is taken in $\bbP^n_K$. This shows that $\phi(x) = \phi(y)$ if and only if 
$G \cdot x$ and $G \cdot y \subset X_K$ have 
the same closure in $\bbP^n_K$. On the other hand,
$G \cdot x$ and $G \cdot y$ have 
the same closure in $\bbP^n_K$ if and only if they have the same
closure in $X_K$.

\smallskip
(a) It suffices to show that the rational maps $\phi \circ g$ and $\phi
\colon X \dasharrow \Hilb(n)$ agree on the generic point 
$x_{\rm gen}$ of $X$ for every $g \in G$. Choose $g \in G$ and fix 
it for the rest of the proof. Then
$x := x_{\rm gen}$ and $y := g(x_{\rm gen})$
are $K_{\rm gen}$-points of $X$, where 
$K_{\rm gen} :=k(X)$. Since $G$ is dominant, neither $x$ nor $y$ lie on
any proper subvariety of $X$ defined over $k$. In particular, 
they do not lie in the indeterminacy locus of any $h \in G$ or on 
$Y_i$ for any $i \geqslant 1$. By \eqref{e.fiber2}, proving that
$\phi(x) = \phi(y)$ is equivalent to showing that
\begin{equation} \label{e.V_x}
\text{$V_x = V_y$ in $\bbP^n_{K_{\rm gen}}$,}
\end{equation} 
where $V_x, V_y \in \bbP^n_{K_{\rm gen}}$ are the fibers
of $x$ and $y$, under $\phi \colon V \to X$ in $\bbP^n_{K_{\rm gen}}$. 

Since $x, y \in X_0(K_{\rm gen})$ do not lie 
on $Y_i$ for any $i \geqslant 1$, part (b) tells us that
$V_x = \overline{G \cdot x}$ and $V_y = \overline{G \cdot y}$.
If $g$ is invertible in $G$, this immediately implies~\eqref{e.V_x}, 
since $G \cdot x = G \cdot y$.  In general (if $g$ is not necessarily 
invertible), we use the following alternative
argument to prove~\eqref{e.V_x}.  Since $y = g(x)$, we have 
$G \cdot y \subset G \cdot x$ and thus
\begin{equation} \label{e.orbits}
V_y \subset V_x \, . 
\end{equation}
Moreover, since $\pi \colon V \to X$ is flat over $X_0$, $V_x$ 
and $V_y$ have the same Hilbert function. Now~\eqref{e.V_x} follows
from~\eqref{e.orbits}.

\smallskip
(d) If $g$ is an invertible element of $G$ then one easily checks that
the variety $V \subset X \times \bbP^n$ defined in~\eqref{e.graph},
is $g$-invariant, where $g$ acts on $X \times \bbP^n$ via the first factor. 
Consequently, the flat locus $X_0 \subset X$ of the projection 
$\pi \colon V \to X$ is $g$-invariant. 

\smallskip
(e) By part (a), $\phi((G \cdot x) \cap X_0) = \phi(x)$ and thus
\begin{equation} \label{e.main(e)}
\overline{(G \cdot x) \cap X_0} \subset 
\phi^{-1}(\phi(x)),
\end{equation}
where the closure is taken in $X_0$.
On the other hand, if $y \in X_0$ and $\phi(y) = \phi(x)$ then by 
\eqref{e.fiber2}, $V_y = V_x$. Since $G$ is a monoid, $y \in V_y$ and
thus $y \in V_x$.
In other words, $\phi^{-1}(\phi(x)) \cap X_0 \subset V_x \cap X_0$.
Combining this with~\eqref{e.main(e)}, we obtain
\begin{equation} \label{e.main(e)'}
\overline{(G \cdot x) \cap X_0} \subset 
\phi^{-1}(\phi(x)) \subset V_x \cap X_0 \, . 
\end{equation}
On the other hand, by~\eqref{e.fiber}, $G \cdot x$ is dense in $V_x$.
Thus $(G \cdot x) \cap X_0$ is dense in $V_x \cap X_0$, i.e.,
\[ \overline{(G \cdot x) \cap X_0} = V_x \cap X_0 \, . \] 
We conclude that
both containments in~\eqref{e.main(e)'} are equalities. In particular,
$\overline{(G \cdot x) \cap X_0} = \phi^{-1}(\phi(x))$, as desired.

\section{Proof of Theorem~\ref{thm.main}(c)}
\label{sect.main(c)}

By part (a), $\phi^*(k(Z)) \subset k(X)^G$.  Let $Y$ be 
a $k$-variety whose function field $k(Y)$ is $k(X)^G$.
The inclusions 
 $k(Z) \stackrel{i^*}{\hookrightarrow} k(Y) = 
k(X)^G \stackrel{\psi^*}{\hookrightarrow} k(X) = k(X_0)$ 
 induce dominant rational maps
 \[ 
\xymatrix{
X_0 \ar@{->}[d]^{\; \; \psi}  \ar@{->}@/_1.5pc/[dd]_{\; \; \phi} \\  
Y \ar@{->}[d]^{\; \; i}    \\
Z. }
\]
After replacing $X_0$, $Z$, and $Y$ by suitable dense open
subvarieties, we may assume that all three maps in the above
diagram are regular. 

Let $K/k$ be a field and $x, y \in X_0(K)$ be as in 
Theorem~\ref{thm.main}(b). We claim that if $\phi(x) = \phi(y)$
then $\psi(x) = \psi(y)$. Indeed, by Theorem~\ref{thm.main}(b), 
$\phi(x) = \phi(y)$ implies that $\overline{G \cdot x} = \overline{G \cdot y}$
in $X_K$. Then 
\begin{equation} \label{e.thm.main(c)}
\text{$\overline{(G \cdot x) \cap X_0} = \overline{(G \cdot y) \cap X_0}$ 
in $X_0$.}
\end{equation}
By our construction, $\psi$ is $G$-equivariant, where $G$ acts trivially on
$Y$. Thus $\psi$ sends all of 
$\overline{(G \cdot x) \cap X_0}$ to the point $\psi(x)$ and all of 
$\overline{(G \cdot y) \cap X_0}$ to the point $\psi(y)$ in $Y$.
By~\eqref{e.thm.main(c)}, $\psi(x) = \psi(y)$, as claimed.

In particular, if $K/k$ is  field extension and $x, y \colon \Spec(K) \to X$
are dominant points, then $x$ and $y$ satisfy the conditions of
Theorem~\ref{thm.main}(b) and thus 
\[ \text{$\phi(x) = \phi(y)$ if and only if 
$\psi(x) = \psi(y)$.} \]
By Lemma~\ref{lem.dominant} below, $i$ is purely inseparable.
This proves the first assertion
of Theorem~\ref{thm.main}(d).

To prove the second assertion of part (d), we 
simply replace $Z$ by $Y$ and $\phi$ 
by $\psi$.  Since $i$ is inseparable, properties of $\phi$ 
asserted by Theorem~\ref{thm.main} are shared by $\psi$.

\begin{lemma} \label{lem.dominant}
Let $\phi \colon X \stackrel{\psi}{\to} Y \stackrel{i}{\to} Z$ be 
dominant maps of irreducible $k$-varieties.  
Suppose that for any pair of
dominant points $x, x' \colon \Spec(K) \to X$, where $K/k$ is 
a field extension, 
\begin{equation} \label{e.psi}
\text{$\phi(x) = \phi(x')$ if and only if $\psi(x) = \psi(x')$.}
\end{equation}
Then the field extension $k(Y)/i^*(k(Z))$ is purely inseparable. 
\end{lemma}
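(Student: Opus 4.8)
The plan is to reduce the statement to a concrete assertion about function fields. Let $L = i^*(k(Z))$, viewed as a subfield of $k(Y)$, and suppose for contradiction that $k(Y)/L$ is not purely inseparable. I would argue as follows. If $k(Y)/L$ is not purely inseparable, then there is a subextension $L \subset L' \subset k(Y)$ with $L'/L$ separable of degree $\geqslant 2$; by the primitive element theorem (applied to a finite subextension of $k(Y)/L$ that witnesses the failure of pure inseparability) we may assume $L' = L(\theta)$, where $\theta$ has separable minimal polynomial $f(T) \in L[T]$ of degree $m \geqslant 2$. The key point is that $f$ then has $m$ distinct roots $\theta = \theta_1, \theta_2, \dots, \theta_m$ in some extension $\Omega$ of $k(Y)$, and $\theta_2 \neq \theta_1$.

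Next I would produce two dominant points of $X$ that violate~\eqref{e.psi}. Take $K$ to be a suitable field containing both $k(Y)$ and the splitting field of $f$; more precisely, let $\eta \colon \Spec k(Y) \to Y$ be the generic point, giving a dominant point $x \colon \Spec k(X) \to X$ corresponding to the identity $k(X) \to k(X)$ after base change (i.e. I take $x = x_{\mathrm{gen}}$, the generic point of $X$, and work with $K = k(X)$). To get a second point, I would use an $L$-embedding $\tau \colon k(Y) \hookrightarrow \Omega$ over $L$ sending $\theta \mapsto \theta_2$; since $k(X)$ is generated over $k(Y)$, I can extend (after possibly enlarging $\Omega$) to an embedding $k(X) \hookrightarrow \Omega$, which is the same as a dominant point $x' \colon \Spec \Omega \to X$. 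After base-changing $x$ along $k(X) \hookrightarrow \Omega$ so that both $x$ and $x'$ are $\Omega$-points, the composition with $\psi$ gives the generic point of $Y$ in two ways: $\psi(x)$ corresponds to the inclusion $k(Y) \hookrightarrow \Omega$, while $\psi(x')$ corresponds to $\tau$. These differ because $\theta \mapsto \theta_1$ under the first and $\theta \mapsto \theta_2 \neq \theta_1$ under the second, so $\psi(x) \neq \psi(x')$. On the other hand $i \circ \psi = \phi$ and $\tau$ restricts to the identity on $L = i^*(k(Z))$, so $\phi(x) = \phi(x')$. This contradicts~\eqref{e.psi}.

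The main obstacle, and the step requiring care, is the bookkeeping with points versus embeddings: a dominant point $\Spec K \to X$ over $k$ is the same datum as a $k$-embedding $k(X) \hookrightarrow K$, and equality of two such points after a common base change means the two embeddings agree; I need to make sure that when I enlarge the field and base-change $x$, the resulting pair $(x, x')$ genuinely has $\phi(x) = \phi(x')$ as $\Omega$-points (same embedding $k(Z) \hookrightarrow \Omega$) while $\psi(x) \neq \psi(x')$ (different embeddings $k(Y) \hookrightarrow \Omega$). The separability hypothesis enters precisely in guaranteeing that $f$ has a second root $\theta_2 \neq \theta_1$, which is what forces the two extensions of the given embedding of $L$ to be distinct on $k(Y)$; in the purely inseparable case no such second root exists and the argument — correctly — breaks down. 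A clean way to organize this is to choose $\Omega$ to be an algebraic closure of $k(X)$: then every $k$-embedding of $k(Y)$ extends to $k(X)$, and the finitely many $L$-embeddings of $L'$ into $\Omega$ correspond bijectively to the roots of $f$, giving the required $x'$ directly.
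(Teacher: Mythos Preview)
Your approach---producing two distinct $L$-embeddings of $k(Y)$ into a common algebraically closed field, extending each to $k(X)$, and reading off a pair of dominant points violating the hypothesis---is a legitimate and more field-theoretic alternative to the paper's geometric argument via fibers. However, there is a gap. You assert that ``if $k(Y)/L$ is not purely inseparable, then there is a subextension $L \subset L' \subset k(Y)$ with $L'/L$ separable of degree $\geqslant 2$,'' but this presupposes that $k(Y)/L$ is algebraic. If $k(Y)/L$ has positive transcendence degree and $L$ happens to be algebraically closed in $k(Y)$ (e.g.\ $k(Y)=L(t)$ with $L$ algebraically closed), there is no nontrivial finite subextension at all, and your construction of $\theta$ with a separable minimal polynomial breaks down. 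Since ``purely inseparable'' includes ``algebraic,'' the conclusion of the lemma contains the assertion $\dim Y=\dim Z$, and this has to be established.

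The paper handles this geometrically: using the hypothesis together with the fiber dimension theorem, it shows that the fiber $i^{-1}(z)$ over a dominant point $z$ of $Z$ with algebraically closed residue field consists of a single point, from which both $\dim Y=\dim Z$ and separable degree $1$ follow at once. Your idea can be repaired in the transcendental case as well: if $\theta\in k(Y)$ is transcendental over $L$, the $L$-automorphism of $L(\theta)$ sending $\theta\mapsto\theta+1$ extends (by a transcendence-degree count inside $\Omega=\overline{k(X)}$) to an $L$-embedding $k(Y)\hookrightarrow\Omega$ distinct from the inclusion, and thence to $k(X)$; the contradiction then runs exactly as you wrote. But as it stands the argument is incomplete.
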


\begin{proof}
Let $F$ be the algebraic closure of
$k(X)$ and $x \colon \Spec(F) \to X$ be the dominant
$F$-point of $X$ obtained by composing the natural 
projection $\Spec(F) \to \Spec(k(X))$ with the
generic point $\Spec(k(X)) \to X$.  Set $z := \phi(x)$.
The fiber $\phi^{-1}(z)$ is an $F$-subvariety of $X_F$. 
Denote its irreducible components by $X_1, \dots, X_n$.  

By the fiber dimension theorem, the generic point
$x_i \colon \Spec(F(X_i)) \to X_i \hookrightarrow X$ 
is dominant for every $i = 1, \dots, n$.  
If $K/F$ is a compositum of $F(X_1), \ldots, F(X_n)$ over $F$
and $(x_i)_K$ is the composition of the projection
$\Spec(K) \to \Spec(F(X_i))$ with $x_i$, then 
\[ \phi((x_1)_K) = \dots = \phi((x_n)_K) = z_K = \phi(x_K) \, . \] 
Our assumption~\eqref{e.psi} now tells us that
$\psi((x_1)_K) = \dots = \psi((x_n)_K) = \psi(x_K)$.
Since $x$ is, by definition, an $F$-point of $X$, we see that 
$\psi((x_1)_K) = \dots = \psi((x_n)_K) = \psi(x)$ descends to a
dominant $F$-point $y \colon \Spec(F) \to Y$, where $i(y) = z$. 
In other words $\psi$ maps each $X_i$ to the single point
$y \in Y(F)$, as depicted in the following
diagram:
\[ \xymatrix{  
X \ar@{->}[d]^{\psi}  
\ar@{->}@/_1.5pc/[dd]_{\; \; \phi}   
 &  & X_1 \ar@{->}[dr] & \hdots & X_n \ar@{->}[dl] \\ 
Y \ar@{->}[d]^{i} & &  & y \ar@{->}[d] &  \\ 
Z &  &  & z & .}
\]
Thus $\psi(\phi^{-1}(z)) = y$. Equivalently,
$\psi(\psi^{-1}(i^{-1}(z))) = y$ or $i^{-1}(z) = y$. 
Applying the fiber dimension theorem one more time, we see that
$\dim(Y) = \dim(Z)$.  Since $z \colon \Spec(F) \to Z$ is dominant, and
$F$ is algebraically closed, the number of preimages of
$z$ in $Y$ equals the separability degree of $k(Y)$ over $i^*(k(Z))$.   
In our case the preimage of $z$ a single point $y \in Y(F)$; hence,
$k(Y)$ is purely inseparable over $i^* k(Z)$.
This completes the proof of Lemma~\ref{lem.dominant} and thus of
Theorem~\ref{thm.main}.
\end{proof}

  
\begin{remark} \label{rem.inseparable} 
We do not know whether or not $\phi^* k(Z)$ always coincides with
$k(X)^G$, where $Z$ is the closure of the image of
the rational map $\phi \colon X \dasharrow \Hilb(n)$ we constructed.
As we have just seen, this is always the case in characteristic zero, 
so the question is only of interest in prime characteristic. 
In the setting, where an algebraic group 
is acting rationally on $X$ (which is a bit more general than
that of Theorem~\ref{thm.rosenlicht-algebraic}), an analogous 
question concerning the map $\phi \colon X \dasharrow C$ into an appropriate 
Chow variety constructed by Rosenlicht, was left open 
in~\cite{rosenlicht}.  A.~Seidenberg~\cite{seidenberg} subsequently 
showed that, indeed, $k(C) = k(X)^G$ under the assumption that 
the $G$-action on $X$ 
is regular (as in Theorem~\ref{thm.rosenlicht-algebraic}).
\end{remark}

\section{Some corollaries and examples}

The following corollary of Theorem~\ref{thm.main} is a generalization of the
Medvedev-Scanlon conjecture~\cite[Conjecture 7.14]{medvedev-scanlon}
in the case, where the base field $k$ is uncountable.

\begin{cor} \label{cor.medvedev-scanlon-uncountable} 
Let $k$ be an uncountable algebraically closed field,
$X$ be an irreducible quasi-projective $k$-variety, 
and $G$ be a semigroup of dominant regular maps $X \to X$.
If $k(X)^G = k$ then $G \cdot x$ is dense in $X$ for some
$x \in X(k)$.  
\end{cor}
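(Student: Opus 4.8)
The plan is to derive the corollary from Theorem~\ref{thm.main}: the hypothesis $k(X)^G=k$ will force the quotient $Z$ to be a point, so that the fibre of $\phi$ is all of $X_0$, and then parts~(a) and~(e) of the theorem, together with uncountability of $k$, will produce a $k$-point with dense orbit.

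First I would dispose of two harmless reductions. If $\dim X=0$ then $X$ is a single $k$-point and there is nothing to prove, so assume $\dim X\geqslant 1$. Next, replace $G$ by the monoid $G^{+}:=G\cup\{\id_X\}$, which is again a semigroup of dominant \emph{regular} self-maps of $X$; invariance under $\id_X$ being vacuous, $k(X)^{G^{+}}=k(X)^{G}=k$. Since $G^{+}\cdot x=\{x\}\cup(G\cdot x)$ and $X$ is irreducible of positive dimension, $\overline{G^{+}\cdot x}=X$ forces $\overline{G\cdot x}=X$ (the open set $X\smallsetminus\overline{G\cdot x}$, if nonempty, is infinite, hence not contained in $\{x\}$). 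So it is enough to treat the monoid $G^{+}$, and I assume from now on that $G$ is a monoid. Finally, because every $g\in G$ is a morphism $X\to X$, its indeterminacy locus is empty, so the genericity condition in Theorem~\ref{thm.main}(b) reduces to ``$x$ avoids the countably many subvarieties $Y_1,Y_2,\dots$''.

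Now apply Theorem~\ref{thm.main}, obtaining $X_0$, the $Y_i$, and $\phi\colon X_0\to Z$ with $Z$ the closure of the image of $\phi$. By part~(a), $(\phi^{*}f)\circ g=\phi^{*}f$ for every $g\in G$ and $f\in k(Z)$, so $\phi^{*}k(Z)\subseteq k(X_0)^{G}=k(X)^{G}=k$; since $\phi$ is a $k$-morphism, also $k\subseteq\phi^{*}k(Z)$, whence $\phi^{*}k(Z)=k$. As $Z$ is irreducible with function field $k$, it is a single point, $\phi$ is constant, and $\phi^{-1}(\phi(x))=X_0$ for every point $x$ of $X_0$. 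Because $G$ is a monoid, part~(e) now applies: for any field extension $K/k$ and any $x\in X_0(K)$ lying off $\bigcup_i Y_i$, we get $X_0=\phi^{-1}(\phi(x))=\overline{(G\cdot x)\cap X_0}$, the closure taken in $X_0$. Hence $(G\cdot x)\cap X_0$ is dense in $X_0$, and therefore $G\cdot x$ is dense in $X$.

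It remains to find such an $x$ which is a $k$-point, and this is the only input beyond Theorem~\ref{thm.main}. Since $X_0$ is an irreducible $k$-variety of dimension $\geqslant 1$ and $k$ is algebraically closed and uncountable, $X_0(k)$ is not covered by the countably many proper closed subvarieties $Y_i$ — the standard fact that a positive-dimensional irreducible variety over an uncountable algebraically closed field is not a countable union of proper subvarieties. Picking $x\in X_0(k)\smallsetminus\bigcup_i Y_i(k)\subseteq X(k)$, the previous paragraph shows $G\cdot x$ is dense in $X$, completing the proof. I expect the reductions and the identification of $Z$ with a point to be entirely routine, and the main (but well-known) obstacle to be the last step: establishing, or supplying a reference for, the uncountability fact that lets us pick the rational point $x$ in very general position.
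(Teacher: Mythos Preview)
Your proof is correct and essentially parallel to the paper's, but you take a slightly different route at the end. Both arguments begin by observing that $k(X)^G=k$ forces $Z$ to be a point. You then reduce to the case where $G$ is a monoid and invoke part~(e) to conclude that a \emph{single} very general $k$-point $x$ already has $\overline{(G\cdot x)\cap X_0}=X_0$. The paper instead stays with the original semigroup, uses part~(b) to see that all very general $k$-points share a common orbit closure $Y$, and then argues that $Y$ must be all of $X$ because $Y$ contains $g(S)$ for any $g\in G$ (here $S$ is the dense set of very general $k$-points, and $g(S)$ is dense since $g$ is dominant). Your approach is a bit more direct once the monoid reduction is made; the paper's avoids that reduction at the cost of a short density argument. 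Both rely on the same uncountability fact to produce points in very general position.
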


\begin{proof} Let $X_0 \subset X$ be a dense open subset, and
$\phi \colon X_0 \to Z$ be a morphism such that $\phi^* k(Z) = k(X)^G$,
as in Theorem~\ref{thm.main}.  Since $k(X)^G = k$, the variety $Z$ 
is a single point. 

Let $S \subset X_0(K)$ be the set of $k$-points
of $X_0$ away from the exceptional sets $Y_i$ for every $i \geqslant 1$. 
Since $k$ is algebraically closed and uncountable, 
$S$ is Zariski dense in $X_0$. By Theorem~\ref{thm.main}(b),
\begin{equation}
\label{e.common-orbit-closure}
\text{$Y = \overline{G \cdot x}$ is independent of the choice of $x \in S$.}
\end{equation}
Since $g \colon X \to X$ is dominant for every $g \in G$, we see that
the union
\[ \bigcup_{x \in S} \overline{G \cdot x} \]
is dense in $X$. On the other hand, by 
\eqref{e.common-orbit-closure}, this union equals $Y$, which is closed
in $X$. We conclude that $Y = X$, i.e.,
$\overline{G \cdot x} = X$ for every $x \in S$.
\end{proof}

Note that in the statement of Corollary~\ref{cor.medvedev-scanlon-uncountable}
we do not make any assumptions of the semigroup $G$; 
in particular, it may be uncountable. However, the following
corollary shows that $G$-orbit closures in very general position
are controlled by a countable subsemigroup $H \subset G$.

\begin{cor} \label{cor.countable}
Under the assumptions of Theorem~\ref{thm.main}, with $k(Z) = k(X)^G$,
there exists a countable subsemigroup $H$ of $G$ such that

\smallskip
(a) $k(X)^G = k(X)^H$, and

\smallskip
(b) Moreover, there exists a 
a countable collection $\{ W_i \, | \, i \geqslant 1 \}$ of $H$-invariant 
subvarieties of $X_0$ defined over $k$ with the following property. For 
any field extension $F/k$ and any $x \in X_0(F)$ away 
from the indeterminacy loci of all $g \in G$ and 
away from $\cup_{i \geqslant 1} \, W_i$, we have 
$\overline{G \cdot x} = \overline{H \cdot x}$.   
\end{cor}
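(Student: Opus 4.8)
The plan is to exploit the fact that, in the construction of Sections~\ref{sect.sections} and~\ref{sect.hilbert-scheme}, the semigroup $G$ enters only through the ranks of a countable family of matrices over $k(X)$, each of which has finite rank; hence a countable, suitably chosen subsemigroup of $G$ already sees all the relevant data. Fix the embedding $X\subset\bbP^n$ and, as in the proof of Lemma~\ref{lem.Y_d}, an affine chart $\bbA^n\subset\bbP^n$ meeting $X$, and recall for each $d\ge1$ the matrix
\[
A^{(d)}_{\rm gen}=\bigl(M_i(g)\bigr)_{\,g\in G,\ 1\le i\le l_d}
\]
over $k(X)$, with rows indexed by $g\in G$ and columns indexed by the monomials $M_i$ of Proposition~\ref{prop.sections}. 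The purely linear-algebraic part of that proof (which uses no flatness) says that, for any field $F/k$ and any $x\in X(F)$ away from the indeterminacy loci of all $g\in G$, the degree-$\le d$ piece $I(\overline{G\cdot x})[d]$ of the ideal of $\overline{G\cdot x}$ is the kernel of the evaluated matrix $A^{(d)}_x$. Since $r_d:=\rank A^{(d)}_{\rm gen}\le l_d<\infty$, I would choose finitely many elements $g^{(d)}_1,\dots,g^{(d)}_{r_d}\in G$ whose rows already realize rank $r_d$, and let $H\subseteq G$ be the subsemigroup generated by all of them, over all $d\ge1$. As a semigroup with a countable generating set, $H$ is countable, and the matrices $A^{(d),H}_{\rm gen}$ obtained by keeping only the rows indexed by $H$ satisfy $\rank A^{(d),H}_{\rm gen}=r_d$ for every $d$ (they contain the chosen rows, while $H\subseteq G$ bounds the rank by $r_d$).

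For part (b) I would set $W_d:=\{\,x:\rank A^{(d),H}_x<r_d\,\}$, a proper closed $k$-subvariety of $X$ because $\rank A^{(d),H}_{\rm gen}=r_d$. The computation in the proof of Lemma~\ref{lem.Y_d}, namely $B_{i_1,\dots,i_r,h_1,\dots,h_r}(hx)=B_{i_1,\dots,i_r,h_1h,\dots,h_rh}(x)$ with $h_ih\in H$ since $H$ is a semigroup, shows each $W_d$ is $H$-invariant. Now let $x\in X_0(F)$ lie away from the indeterminacy loci of all $g\in G$ and away from $\bigcup_d W_d$. Then for every $d$
\[
r_d=\rank A^{(d),H}_x\ \le\ \rank A^{(d),G}_x\ \le\ \rank A^{(d),G}_{\rm gen}=r_d,
\]
the left inequality because $A^{(d),H}_x$ is a row-submatrix of $A^{(d),G}_x$, the right one because rank does not increase under specialization. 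Hence $\rank A^{(d),G}_x=\rank A^{(d),H}_x$, so $I(\overline{G\cdot x})[d]=I(\overline{H\cdot x})[d]$ for all $d\ge1$; since $\overline{H\cdot x}\subseteq\overline{G\cdot x}$ to begin with, this forces $\overline{G\cdot x}=\overline{H\cdot x}$ in $\bbP^n_F$, hence in $X_F$. Restricting the $W_d$ to $X_0$ (with the same care as in Lemma~\ref{lem.Y_d}) gives the collection $\{W_i\}$ required in (b).

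Part (a) then follows by applying part (b) to the generic point $x_{\rm gen}$ of $X$, which lies away from every proper $k$-subvariety and every indeterminacy locus: one obtains $\overline{H\cdot x_{\rm gen}}=\overline{G\cdot x_{\rm gen}}$ in $\bbP^n_{k(X)}$. Since the map $\phi\colon X\dasharrow\Hilb(n)$ of Section~\ref{sect.hilbert-scheme} sends $x_{\rm gen}$ to the class of $\overline{G\cdot x_{\rm gen}}$ (by~\eqref{e.fiber} and the construction of $\phi$), the corresponding map built from $H$ agrees with $\phi$ at $x_{\rm gen}$, hence equals $\phi$ as a rational map and has the same image closure $Z$. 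By Theorem~\ref{thm.main}(c) applied to the semigroup $H$, $k(X)^H$ is purely inseparable over $\phi^*k(Z)=k(X)^G$ (the last equality is the hypothesis of the corollary); combined with $k(X)^G\subseteq k(X)^H$ and the remark that in characteristic $p$ any $f\in k(X)$ with $f^{p^e}\in k(X)^G$ is itself $G$-invariant — because $(f\circ g)^{p^e}=f^{p^e}\circ g=f^{p^e}$ and $p^e$-th roots in $k(X)$ are unique — this gives $k(X)^H=k(X)^G$ (in characteristic $0$ the inseparability step is vacuous). One can also argue (a) directly, by observing that $f\in k(X)$ is $G$-invariant exactly when its restriction to $\overline{G\cdot x_{\rm gen}}\subset\bbP^n_{k(X)}$ is constant, and similarly for $H$.

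The real content is the finite-rank observation of the first paragraph; granting it, part (b) is a rank comparison and part (a) is assembled from Theorem~\ref{thm.main}(c) applied to $H$ together with the elementary $p$-th power remark. The step I expect to need the most care is the same as in Lemma~\ref{lem.Y_d}: arranging the exceptional sets $W_i$ as honest $H$-invariant subvarieties of the particular $X_0$ of Theorem~\ref{thm.main}, given that the matrices $A^{(d),H}_{\rm gen}$ are only defined over $k(X)$ after choosing an affine chart and (in the original proof) after a flatness reduction.
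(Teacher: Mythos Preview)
Your proof is correct and follows the paper's strategy closely. The paper packages your finite-rank observation as a standalone claim (any dense collection of points in a variety contains a countable dense subcollection, proved degree by degree exactly as you do), uses it to choose $H$ so that the $H$-sections are already dense in the variety $V$ built from $G$, and then runs Proposition~\ref{prop.sections} for the \emph{regular} sections $s_h\colon X_0\to V_0\subset X_0\times\bbP^n$ to obtain $V_x=\overline{H\cdot x}$; the sandwich $\overline{H\cdot x}\subseteq\overline{G\cdot x}\subseteq V_x$ then finishes (b). Your direct rank comparison bypasses $V_x$, which is slightly more economical but runs into precisely the affine-chart issue you flag at the end: evaluating $A^{(d),G}_x$ at a specific $x$ requires every $g(x)$ to lie in the chosen $\bbA^n$, whereas the paper's detour through the projective fiber $V_x$ avoids this entirely. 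For part (a) the two arguments coincide; one small correction is that the hypothesis ``$k(Z)=k(X)^G$'' refers to the modified $Z$ of Theorem~\ref{thm.main}(c), not necessarily to the Hilbert-scheme image, so your chain should read $\phi^*k(Z_{\rm Hilb})\subseteq k(X)^G\subseteq k(X)^H$ with $k(X)^H$ purely inseparable over the leftmost term --- the conclusion is unaffected.
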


\begin{proof}
Our proof will rely on the following elementary 

\smallskip
Claim: Let $W$ be an algebraic variety (not necessarily irreducible),
and $S$ be a dense collection of points in $W$. Then there exists a
countable subcollection $S' \subset S$ which is dense in $W$.

\smallskip
After replacing $W$ by a dense open subset,
and removing the points of $S$ that do not lie in this dense open subset,
we may assume without loss of generality that $W \subset \bbA^n$ is affine. 
Let $I(W)$ be the ideal of $W$ in $k[x_1, \dots, x_n]$ and 
$I(W)[d]$ be the finite-dimensional vector space of polynomials 
of degree $\leq d$ vanishing on $W$. It is easy to see that for 
each $d$ there is a finite subset $S_d \subset S$ such that 
$I(S_d)[d] = I(W)[d]$.  Taking $S' = \cup_{d=1}^{\infty} S_d$, we see
that $I(S')[d] = I(W)[d]$ for every $d \geqslant 1$. Thus
$I(S') = I(W)$ and $S'$ is dense in $W$. This proves the claim.
  
We now proceed with the proof of Corollary~\ref{cor.countable}.
We will assume that $X \subset \bbP^n$ is a locally closed 
subvariety of $\bbP^n$ for some $n \geq 1$. 
Let $V \subset X \times \bbP^n$ 
be the closure of the union of the images of 
\[ s_g \colon X \dasharrow X \times \bbP^n \, , \]
as in~\eqref{e.graph}.
Applying the claim to the generic points of the closures of the
images of $s_g$, as $g \in G$, we see that there exists a 
countable collection of elements $\{ h_i, \, | \, i \geqslant 1 \}$
such that the images of $s_{h_i}$ are dense in $V$. Let
$H$ be the countable subsemigroup of $G$ generated by 
these $h_i$. Denote  the flat locus of
the projection $\pi \colon V \to X$ by $X_0 \subset X$ and 
the morphism associated to $\pi$, viewed as a family of subvarieties 
over $X_0$, by $\phi \colon X_0 \to \Hilb(n)$.  Arguing as 
in Section~\ref{sect.hilbert-scheme},
we see that there exists a countable collection 
$\{ W_i \, | \, i \geqslant 1 \}$ of $H$-invariant
closed subvarieties of $X_0$ such that for any field extension
$K/k$ and any point $x \in X_0(K)$ away from each $W_i$ and 
from the indeterminacy locus of every $g \in G$, 
$V_x = \overline{H \cdot x}$ in $X$; cf.~\eqref{e.fiber}. Since 
$H \cdot x \subset G \cdot x \subset V_x$, we 
have $\overline{H \cdot x} = \overline{G \cdot x} = V_x$.
This proves part (b).

To prove part (a), note that by our construction, 
$\phi^* \, k(Z) \subset k(X)^G \subset k(X)^H$, and by
Theorem~\ref{thm.main}(c), $k(X)^H$ is purely inseparable over 
$\phi^* \, k(Z)$. Thus $k(X)^H$ is purely inseparable over $k(X)^G$.

It remains to show that, in fact, $k(X)^H = k(X)^G$.
Choose $a \in k(X)^H$. Then $a$ satisfies some purely inseparable polynomial
$p(t) \in k(X)^G[t]$. For any $g \in G$, $g(a)$ also satisfies 
$p(t)$. Since $a$ is the only root of $p(t)$ in
$k(X)$, we conclude that $g(a) = a$. In other words,
$a \in k(X)^G$, i.e., $k(X)^H = k(X)^G$, as desired.
\end{proof}

\begin{example} \label{ex.finitely-generated}
In general one cannot expect to find a finitely generated
subsemigroup $H \subset G$ such that $k(X)^G = k(X)^H$. 
For example, let $X$ be a complex abelian variety of 
dimension $\geqslant 1$, and $G$ 
be the group of translations on $X$ by torsion points of $X(\C)$. 
Then $G$ is countable and $\C(X)^G=\C$. On the other hand, 
any finitely generated subgroup $H$ of $G$ is finite 
and $\C(X)^H$ is a finite subextension of $\C(X)$.
\qed
\end{example}

The following examples show that the countable collection 
$\{ Y_i, \, i \geq 1 \}$ of proper ``exceptional"
subvarieties of $X$ cannot be replaced by
a finite collection in the statement of Theorem~\ref{thm.main}.

\begin{example} \label{ex1}
Let  $E$ be an elliptic curve over $k$, $X = E \times E$,
$\sigma$ be an automorphism of $X$ given by $\sigma(x, y) := (x, x + y)$,
and $G \isomto \mathbb N$ (or $G \isomto \mathbb Z$)
is generated by $\sigma$ as a semigroup (or as a group). 
 In this case $Z$ and $\phi$ are unique (up to birational 
isomorphism), and is easy to see that $\phi$ is just projection 
to the first factor,
$\phi \colon X \to Z := E$.  The fiber $X_z =  \{ x \} \times E$ is the closure 
of a single orbit if and only if $x$ is of infinite order in $E$. 
Thus the ``exceptional set" $Y_1 \cup Y_2 \cup \dots$ has to contain
countably many ``vertical" curves $\{ x \} \times E$, as $x$ ranges 
over the torsion points of $E$.
\end{example}

\begin{example} \label{ex2} Let $X = \bbP^n$ and $G \isomto \mathbb N$ be the cyclic 
semigroup generated by a single dominant morphism $\sigma \colon X \to X$ 
of degree $\ge 2$.  Assume that $k$ is algebraically closed.
Then the exceptional collection $Y_1, Y_2, \dots$ 
is dense in $X$; in particular, it cannot be finite.
\end{example}

\begin{proof} We claim that $\trdeg_k k(X)^G < n$. Indeed, assume 
the contrary. Then the field extension $k(X)/k(X)^G$ is algebraic and
finitely generated; hence, it is finite. Now we can view
$\sigma^* \colon k(X) \to k(X)$ as a $k(X)^G$-linear transformation 
of a finite-dimensional $k(X)^G$-vector space $k(X)$. Since $\sigma$ is
injective, we conclude that it is also surjective, and thus 
$\sigma \colon X \to X$ has degree $1$, contradicting our choice of $\sigma$.

The claim tells us that the general fiber (and hence, every non-empty fiber)
of the map $\sigma \colon X \to Z$ has dimension $\geqslant 1$.   
Suppose $x \in X(k)$ is a periodic point of $X$, i.e., $\sigma^n(x) = x$
for some $n \geqslant 1$. Then
$G \cdot x$ is finite, and thus $\overline{G \cdot x}$ is $0$-dimensional.
Consequently, $G \cdot x$ cannot be dense in the fiber $\phi^{-1} (\phi(x))$, 
so $x$ has to lie in the exceptional locus $Y_1 \cup Y_2 \cup \dots$.
On the other hand,  
by a result of N.~Fakhruddin~\cite[Corollary 5.3]{fakhruddin} 
periodic $k$-points for $\sigma$ are dense in $X = \bbP^n$.
\end{proof}

\section{Rosenlicht's theorem revisited}

To put our proof of Theorem~\ref{thm.main} in perspective,
we will now show how our arguments can be modified 
to obtain a proof of Theorem~\ref{thm.rosenlicht-algebraic}
in the case, where $X \subset \bbP^n$ is an irreducible
quasi-projective $k$-variety.

An action of an algebraic group $G$ on $X$ given by a $k$-morphism 
\[ \begin{array}{c}
\psi \colon G \times X \to X \times X \subset X \times \bbP^n \\
(g, x) \mapsto (x, g(x)). 
\end{array} \]
Once again, we define $V \subset X \times \bbP^n$ to 
be the closure of the image 
of $\psi$, $\pi \colon V \to X$ to be projection 
to the first factor, and $V_x := \pi^{-1}(x)$ to be the fiber 
of $x \in X$, as in~\eqref{e.graph}.
The role of Proposition~\ref{prop.sections}
will now be played by Lemma~\ref{lem.rosenlicht} below. 
Note that the countable collection of exceptional subvarieties
$Y_i \subset X$ from Proposition~\ref{prop.sections}
does not come up, and the proof is considerably simpler 
in this setting.

\begin{lemma} \label{lem.rosenlicht}
There exists a $G$-invariant dense open subvariety
$U \subset X$ with the following property:
for any field $K/k$ and any $x \in U(K)$, $G \cdot x$ is dense in $V_x$.
\end{lemma}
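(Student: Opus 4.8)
The plan is to exploit the fact that, unlike in the semigroup setting, the orbit map $G \times X \to V$ is a genuine morphism of varieties, so the family $\pi \colon V \to X$ behaves generically like a fibration whose generic fiber is the generic orbit closure. First I would apply generic flatness to $\pi \colon V \to X$ and replace $X$ by a dense open subvariety over which $\pi$ is flat; after a further shrinking I may also assume that the irreducible components of $V$ dominate $X$ and that $V \to X$ is smooth over the chosen open set, or at least that the fiber dimension is locally constant. The key point is to understand the generic fiber: let $x_{\mathrm{gen}} \in X(K_{\mathrm{gen}})$ with $K_{\mathrm{gen}} = k(X)$ be the generic point, and observe that $V_{x_{\mathrm{gen}}}$ is precisely the Zariski closure of $G_{K_{\mathrm{gen}}} \cdot x_{\mathrm{gen}}$ inside $\bbP^n_{K_{\mathrm{gen}}}$. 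Indeed the image of $\psi$ restricted to $G \times \{x_{\mathrm{gen}}\}$ is dense in $V_{x_{\mathrm{gen}}}$, because $V$ is by definition the closure of the image of $\psi$ and $x_{\mathrm{gen}}$ is the generic point of $X$; this is the analogue of the dense-image-of-sections hypothesis in Proposition~\ref{prop.sections}, but now it holds on the nose.

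The second step is to propagate this from the generic point to all $K$-points of a dense open subvariety. Here the idea is that the condition ``$G_K \cdot x$ is dense in $V_x$'' is a constructible condition on $x$ that holds at the generic point, hence holds on a dense open subset $U_0 \subseteq X$. More concretely, consider the morphism $G \times X \to V$ over $X$; its image is constructible and dense, so by Chevalley's theorem it contains a dense open subset $W \subseteq V$, and after shrinking $X$ I may assume $W \to X$ is surjective, i.e. $W_x = W \cap V_x$ is a nonempty open dense subset of $V_x$ contained in $G \cdot x$ for every $x$ in the shrunk $X$. (One must be slightly careful that $W_x$ is dense in $V_x$, not merely nonempty; this follows since $V \to X$ is flat with irreducible — or at least equidimensional — fibers over the chosen open set, so a nonempty open subset of each fiber is dense in it. Alternatively, apply generic flatness to $V \setminus W \to X$ and compare fiber dimensions.) This gives a dense open $U_0 \subseteq X$ such that $G \cdot x$ is dense in $V_x$ for all $x \in U_0(K)$ and all $K/k$.

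The third step is to make $U_0$ $G$-invariant. The standard trick, as in Rosenlicht, is to set $U := \bigcap_{g \in G} g(U_0)$; a priori this is an intersection of a possibly infinite family, but since $G$ is an algebraic group acting morphically, the set $\{(g,x) : x \notin g^{-1}U_0\}$ is closed in $G \times X$, and its projection to $X$ is constructible; its complement $U$ is therefore constructible and $G$-invariant, and it is dense because it contains the generic point (the generic orbit meets $U_0$). One then replaces $U$ by a dense open $G$-invariant subvariety contained in it — such exists because the $G$-saturation of any dense open subset of $U$ is open and dense and $G$-invariant. Finally, shrinking $U_0$ does not spoil the density property established in step two: if $x \in U(K)$, then $V_x$ is unchanged (it only depends on the closure of the full graph, which we fixed before shrinking), and $G \cdot x$ is still dense in $V_x$ because $x$ lies over a point where the constructible density condition holds. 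I expect the main obstacle to be the bookkeeping in step two — ensuring that ``dense in the fiber'' (not just ``nonempty open in the fiber'') is preserved, which is where flatness and equidimensionality of $\pi$ over the chosen locus are essential — together with the routine but slightly delicate verification in step three that the $G$-invariant shrinking can be done without using any noetherianity of the (infinite) index set, relying instead on the algebraicity of the single morphism $\psi$.
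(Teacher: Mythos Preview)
Your plan is correct and rests on the same core idea as the paper: use Chevalley's theorem to find a dense open $W \subset V$ contained in the image of $\psi$, and then argue that for $x$ in a suitable open set the fiber $W_x$ is dense in $V_x$ and contained in $G\cdot x$. The execution differs in two places, and in both the paper is a bit slicker.

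First, rather than leaving $W$ as an arbitrary dense open in the image and then invoking flatness and fiber-dimension comparisons to force $W_x$ dense in $V_x$, the paper immediately replaces $W$ by $\bigcup_{g\in G} g\cdot W$, where $G$ acts on $V\subset X\times\bbP^n$ via the \emph{second} factor. This keeps $W$ open and inside the image, but now $W\cap V_x$ is either empty or exactly $\{x\}\times(G\cdot x)$. Setting $C=V\setminus W$ and $D=\{x: C\text{ contains an irreducible component of }V_x\}$, one checks $D$ is closed and $G$-invariant, and $D\neq X$ by a one-line fiber-dimension argument (here the paper first reduces to $G$ irreducible, so that $V$ is irreducible). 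Then $U=X\setminus D$ works. The upshot is that your entire step~3 (intersecting translates, constructibility, passing to a $G$-invariant open inside a constructible set) is absorbed into the construction: $U$ is $G$-invariant and open from the start.

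Second, the paper treats disconnected $G$ by first proving the lemma for the identity component $G^0$ and then handling the finitely many cosets by a finite intersection $U\cap G^1 U\cap\cdots\cap G^r U$. Your write-up does not separate this case; your fiber-equidimensionality remark in step~2 tacitly needs $V$ irreducible (equivalently $G$ connected), so some such reduction is needed in your argument too.
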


\begin{proof} First we will prove the lemma under the assumption that $G$ 
is irreducible.  In this case $V = \overline{\psi(G \times X)}$ is also irreducible. 
By Chevalley's theorem, $\psi(G \times X)$ contains 
a dense open subvariety $W$ of $V$. After replacing $W$ by the union of 
its $g$-translates, as $g$ ranges over $G$, we may assume that 
$W$ is $G$-invariant.  Here $G$ acts on $V$ via the second factor. 
In particular, $W \cap V_x = \{ x \} \times (G \cdot x)$.
Set $C := V {\bf -} W$ and 
\[ \text{$D := \{ x \in X \, | \, C$ contains an irreducible component of $V_x$
\}.} \]
One readily checks that $D$ is a $G$-invariant closed subvariety of $X$.
We claim that $D \neq X$. Indeed, by the fiber dimension theorem
every irreducible component of $V_x$ has dimension $\geq \dim(V) - \dim(X)$.
If $D = X$, then $\dim(C) = \dim(V)$ and thus $C = V$, since $V$ is 
irreducible.
Consequently, $W = \emptyset$, a contradiction. This proves the claim. 
We conclude that the dense open subset 
$U := X {\bf -} D$ of $X$ has the desired property.
This completes the proof of the lemma under the assumption that $G$ 
is irreducible.  

In general denote the irreducible components of $G$ by 
$G^0, G^1, \dots, G^r$, where $G^0$ is
the identity component.  Let $V^0$ be the closure of the image of the action 
map $G^0 \times X \to X \times X$ in $X \times \bbP^n$. Then
\[ V = V^0 \cup (G^1 \cdot V^0) \cup \dots \cup (G^r \cdot V^0). \,  \]
As we have just shown, there exists a $G^0$-invariant dense open subset 
$U \subset X$ such that $G^0 \cdot x$ is dense in $V^0_{x}$ for any $x \in U$. 
Then clearly $G \cdot x$ is dense in $V_x$. 
A priori $U$ is only $G^0$-invariant, but we can make it $G$-invariant by
replacing it with the dense open subset 
$U^0 \cap (G^1 \cdot U^0) \cap \dots \cap (G^r \cdot U^0)$.
\end{proof}

>From now on we will replace $X$ by $U$, and thus assume that
$G \cdot x$ is dense in $V_x$ for every $x \in X$.
Once again, we 
define $X_0$ as the flat locus of the projection $\pi \colon V \to X$.
The same argument as in the proof of Theorem~\ref{thm.main}(d) shows that
$X_0$ is $G$-invariant. Once again, 
we define $\phi \colon X_0 \to \Hilb(n)$ as the morphism 
associated to $\pi^{-1}(X_0) \subset X \times \bbP^n$, which 
we view as a family of subvarieties of $\bbP^n$ parametrized by $X_0$.  
By construction $\phi$ separates the fibers of $\pi$; these fibers are 
$G$-orbit closures by Lemma~\ref{lem.rosenlicht}.
To complete the proof of Theorem~\ref{thm.rosenlicht-algebraic}
it remains to show that $\phi$ actually separates the orbits
in $X_0$, and not just their closures.  Note that since separating
orbit closures is the most we can do in the dynamical setting,
Lemma~\ref{lem.rosenlicht2} has no counterpart in our proof of
Theorem~\ref{thm.main}.

\begin{lemma} \label{lem.rosenlicht2}
Let $K/k$ be a field extension and $x \in X_0(K)$ be a $K$-point.
Then the orbit $G \cdot x$ is closed in $(X_0)_K$.
\end{lemma}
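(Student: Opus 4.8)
The plan is to reduce to a property of the generic fiber and then spread out. We already know from Lemma~\ref{lem.rosenlicht} (and our standing reduction replacing $X$ by $U$) that for every $x \in X_0(K)$ the orbit $G \cdot x$ is dense in its fiber $V_x = \pi^{-1}(x)$, and that $\phi$ separates these fibers. So the content of the lemma is the reverse: that $G \cdot x$ is not merely dense but actually \emph{equal} to $(V_x)_K$. Equivalently, we must show that the dense constructible subset $G \cdot x \subset V_x$ is closed. The key observation is that constructibility plus density plus a uniform bound on "how non-closed" the orbit can be will force closedness on a suitable open locus, and that this locus is $G$-invariant.

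First I would recall that, by Chevalley's theorem, $\psi(G \times X)$ is a constructible subset of $X \times \bbP^n$; intersecting with the fiber over $x$, the orbit $G \cdot x$ is a constructible subset of $V_x$, dense by Lemma~\ref{lem.rosenlicht}. Therefore $Z_x := V_x \setminus (G\cdot x)$ is a closed subset of $V_x$ of strictly smaller dimension than every component of $V_x$ it meets — in particular $Z_x$ does not contain any irreducible component of $V_x$. Next I would work over the generic point: let $x_{\mathrm{gen}}$ be the generic point of $X_0$ with residue field $K_{\mathrm{gen}} = k(X_0)$. Over a finite extension we may assume the components of $V_{x_{\mathrm{gen}}}$ are geometrically irreducible; applying the orbit-is-dense statement to $x_{\mathrm{gen}}$, the complement $V_{x_{\mathrm{gen}}} \setminus (G \cdot x_{\mathrm{gen}})$ is a proper closed subscheme. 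Because the group action $G \times V \to V$ (over $X_0$, via the second factor) carries the generic fiber to itself transitively on the dense orbit, the orbit $G\cdot x_{\mathrm{gen}}$ is stable under the stabilizer-quotient structure; I would invoke the fact that an orbit of an algebraic group action is always \emph{open} in its closure (a standard consequence of Chevalley plus homogeneity: each point of $G\cdot x$ has the same constructible "profile," so the orbit contains a dense open subset of its closure, hence, being a single orbit, equals that open subset). Since $G\cdot x_{\mathrm{gen}}$ is dense in $V_{x_{\mathrm{gen}}}$, being open in $V_{x_{\mathrm{gen}}}$ is the same as being dense-and-open. To upgrade "open and dense" to "everything," I would argue fiberwise on each geometrically irreducible component $V^{(j)}_{x_{\mathrm{gen}}}$: the orbit meets each such component (else the complement would contain it), it is open in it, and its complement in that component has smaller dimension. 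But here I use the crucial extra input that $G \cdot x_{\mathrm{gen}}$ is a \emph{single} orbit, so it is homogeneous; a homogeneous open dense subset of an irreducible variety whose complement is closed and the action is transitive on the open part forces the complement to be empty — more carefully, the complement $Z$ is $G$-invariant and of dimension $< \dim V^{(j)}$, but $G$ also acts with a dense orbit on $V^{(j)}$, so any proper invariant closed subset is contained in $V^{(j)} \setminus (G\cdot x_{\mathrm{gen}}) = \emptyset$. Thus $G \cdot x_{\mathrm{gen}} = V_{x_{\mathrm{gen}}}$.

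Finally I would spread out. The statement "$\psi(G \times X_0) \supset \pi^{-1}(x)$" holds at the generic point, i.e., $\psi(G\times X_0)$ contains the generic fiber of $\pi$; since $\psi(G \times X_0)$ is constructible in $V$ and contains $V_{x_{\mathrm{gen}}}$, it contains $\pi^{-1}(U_1)$ for some dense open $U_1 \subset X_0$ (a constructible set containing the generic fiber contains the preimage of a dense open of the base). Replacing $X_0$ by the intersection of the $g$-translates of $U_1$ — which is still dense open and now $G$-invariant by the same argument as in the proof of Theorem~\ref{thm.main}(d) — we get that $\psi(G \times X_0) \supset \pi^{-1}(X_0)$, i.e., $G \cdot x = V_x$ for \emph{every} $x \in X_0(K)$ and every $K/k$. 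In particular $G\cdot x$ is closed in $(X_0)_K$, which is the assertion. I expect the main obstacle to be the generic-fiber step: making precise the claim that on the generic fiber the dense orbit is in fact the whole fiber. The cleanest route is probably to avoid appealing to delicate homogeneity arguments over a non-closed field and instead base-change to the algebraic closure $\overline{K_{\mathrm{gen}}}$, where one has the clean classical fact that an orbit of an algebraic group is open in its closure and, being dense here, equals the (irreducible components of the) fiber; then descend. Some care is also needed because $V$, and hence $V_x$, need not be irreducible, so the "dense orbit $\Rightarrow$ whole fiber" argument has to be run component by component, using that the action permutes the components and the orbit meets every one.
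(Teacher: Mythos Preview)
Your proposal has a genuine gap: you reformulate the lemma as the assertion $G\cdot x=(V_x)_K$, but this is strictly stronger than what is being claimed and is in general \emph{false}. Recall that $V_x$ is the fiber of $\pi\colon V\to X$ inside $X\times\bbP^n$, so $V_x\subset\bbP^n_K$ may contain points outside $X_0$ (or even outside $X$, since $X$ is only locally closed in $\bbP^n$). For a concrete instance, take $G=\mathbb{G}_m$ acting on $X=\bbA^1\subset\bbP^1$ by scaling. After the reduction of Lemma~\ref{lem.rosenlicht} one has $X=X_0=\mathbb{G}_m$, but $V=\mathbb{G}_m\times\bbP^1$, so $V_x=\bbP^1$ for every $x$, while $G\cdot x=\mathbb{G}_m\subsetneq\bbP^1$. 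The orbit is closed in $(X_0)_K$, exactly as the lemma asserts, yet it is not equal to $V_x$. Consequently your spreading-out target ``$\psi(G\times X_0)\supset\pi^{-1}(X_0)$'' is false on the nose, and the generic-fiber step meant to seed it is circular: you observe that the $G$-invariant complement $Z$ lies in $V^{(j)}\setminus(G\cdot x_{\mathrm{gen}})$ and then declare the latter empty, which is precisely what you were trying to establish. Density of an orbit never, by itself, forces the boundary to vanish (the same $\mathbb{G}_m$ example already shows this over an algebraically closed field).

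The paper's argument is shorter and rests on a different idea. Base-change to $\overline{K}$. Flatness of $\pi$ over $X_0$ makes $\dim V_x$ independent of $x\in X_0(\overline{K})$; since $G\cdot x$ is dense in $V_x$, \emph{all} orbits in $X_0$ have the same dimension. If some $G\cdot x$ were not closed in $X_0$, pick $y\in X_0(\overline{K})$ in $\overline{G\cdot x}\setminus G\cdot x$; then $G\cdot y$ lies in the boundary of $G\cdot x$, hence $\dim(G\cdot y)<\dim(G\cdot x)$, contradicting equidimensionality. No generic-point analysis or spreading out is required: the whole content is that flatness forces constant fiber dimension, which in turn forces constant orbit dimension.
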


\begin{proof} After base-changing to the algebraic closure $\overline{K}$, we
may assume that $K = k$ is algebraically closed. Since
the morphism $\pi \colon V \to X$ is flat over $X_0$,
$\dim(V_x)$ does not depend on the choice of $x$ in $X_0(k)$. 
Thus the dense open subset $G \cdot x$ of $V_x$ also
has the same dimension for every $x \in X_0(k)$. 

Assume the contrary: $G \cdot x$ is not closed in $X_0$ for some 
$x \in X_0(k)$.  Then the complement to $G \cdot x$ in 
$\overline{G \cdot x}$ has a $k$-point $y \in X_0(k)$.  
Since $G \cdot x$ is open and dense in $\overline{G \cdot x}$,
we have $\dim(G \cdot y) < \dim(G \cdot x)$, a contradiction.
\end{proof}

This completes the proof of Theorem~\ref{thm.rosenlicht-algebraic}(a).
Part (b) is proved in exactly the same way as Theorem~\ref{thm.main}(c)
in Section~\ref{sect.main(c)}.

\bigskip
\noindent
{\sc Acknowledgements.}
The authors are grateful to E. Amerik and T.~Tucker for helpful correspondence.

\bibliographystyle{amsalpha}
\bibliography{Drinbib}

\end{document}